\documentclass[11pt,a4paper]{article}
\usepackage[T1]{fontenc}
\usepackage[utf8]{inputenc}
\usepackage{lmodern}
\usepackage[margin=1in]{geometry}
\usepackage{amsmath,amssymb,amsthm,booktabs,array,longtable}
\usepackage[round]{natbib}
\usepackage[hidelinks,bookmarksnumbered]{hyperref}
\hypersetup{
  pdftitle={An 8+9 gluing computation for the Ramsey number R(F3,F4)},
  pdfauthor={Hanxin Jiang and Zehui Shao}}
\newtheorem{thm}{Theorem}[section]
\newtheorem{prop}[thm]{Proposition}
\newtheorem{lem}[thm]{Lemma}
\newtheorem{cor}[thm]{Corollary}
\numberwithin{equation}{section}
\newcommand{\NR}{N_R}
\newcommand{\NB}{N_B}
\newcommand{\dR}{d_R}
\newcommand{\dB}{d_B}
\newcommand{\e}{e}
\newcommand{\nuu}{\nu}
\newcommand{\Aut}{\operatorname{Aut}}
\newcommand{\file}[1]{\path{#1}}

\title{An $8+9$ gluing computation for the Ramsey number $R(F_3,F_4)$}
\author{Hanxin Jiang\thanks{E-mail: \texttt{hxjiang@e.gzhu.edu.cn}.}
\quad Zehui Shao\thanks{Corresponding author. E-mail: \texttt{zshao@gzhu.edu.cn}.}\\[0.4em]
\small Institute of Computing Science and Technology,\\
\small Guangzhou University, Guangzhou 510006, China}
\date{}

\begin{document}
\maketitle
\begin{abstract}
Let $F_t$ be the graph consisting of $t$ triangles with a common vertex
and otherwise disjoint vertices. We prove that $R(F_3,F_4)=17$
using an $8+9$ gluing computation. Neighbourhood matching arguments give degree and clique
restrictions that reduce the search on seventeen vertices to
$88,294,674$ ordered seed pairs. The verified main computation excludes
$88,294,634$ pairs. A cross-edge counting argument excludes thirty-four
of the forty exceptional pairs, and independently checked finite-domain
certificates exclude the remaining six. Together with the construction
whose red graph is the complete bipartite graph $K_{8,8}$, these results
establish $R(F_3,F_4)=17$.
\end{abstract}

\noindent\textbf{Keywords:} Ramsey number; fan; graph gluing; canonical augmentation;
exhaustive computation; verifiable certificate.
\par\noindent\textbf{2020 Mathematics Subject Classification:} 05C55, 05C30, 05C85.
\medskip

\section{Introduction}
Let $K_n$ denote the complete graph on $n$ vertices. For graphs $G$ and
$H$, the Ramsey number $R(G,H)$ is the least integer $N$ such that every
red--blue edge-colouring of $K_N$ contains a red copy of $G$ or a blue
copy of $H$. Copies are ordinary subgraphs, not necessarily induced.
Determining a Ramsey number requires a colouring for the lower bound
and an exclusion of all colourings at the proposed upper bound.
Computational proofs often combine structural restrictions with a
finite search whose coverage can be justified mathematically; see the
survey of Radziszowski~\citeyearpar{Radziszowski}.

For a positive integer $t$, the fan
\[
 F_t=K_1+tK_2
\]
is formed by joining one vertex to $t$ disjoint edges. Here $sH$ denotes
the disjoint union of $s$ copies of a graph $H$, and $+$ denotes graph
join, which adds every edge between the two disjoint constituent graphs.
The common vertex is the \emph{centre} of the fan, and the $t$
edges opposite it are its \emph{blades}. A graph contains an $F_t$
centred at $v$ if and only if the subgraph induced by the neighbours
of $v$ contains a matching of size $t$. Thus the exclusion of fans can
be expressed through local matching conditions, which also supply
explicit witnesses when a candidate colouring fails.

Lin and Li~\citeyearpar{LinLi} proved that $R(F_2,F_n)=4n+1$ for $n\ge2$ and
that $R(F_m,F_n)\le4n+2m$ for $n\ge m\ge2$. Zhang et~al.~\citeyearpar{ZhangBroersmaChen} established
\[
 R(F_m,F_n)=4n+1
 \quad\text{when}\quad
 n\ge\max\{m^2-m/2,\,11m/2-4\}.
\]
This sufficient range does not cover $(m,n)=(3,4)$. The colouring with
red graph $K_{8,8}$, the complete bipartite graph with two parts of
order eight, and blue graph $2K_8$ gives the lower bound seventeen.
The preceding general upper bound gives twenty-two. The case
$R(F_3,F_4)$ is not assigned an exact value in Table~X of the April 2026
revision of the small Ramsey number survey~\citep{Radziszowski}.

Our approach uses graph gluing, a standard method in computational
Ramsey theory. McKay and
Radziszowski~\citeyearpar{McKayRadziszowski} established $R(4,5)=25$ by gluing
smaller Ramsey graphs and propagating restrictions on their possible
extensions. Goedgebeur and Radziszowski~\citeyearpar{GoedgebeurRadziszowski}
combined neighbourhood extension with degree and edge-count
restrictions to obtain further Ramsey upper bounds. The formal
verification of $R(4,5)=25$ by Gauthier and
Brown~\citeyearpar{GauthierBrown} illustrates the additional role of
independently checked computational certificates. In the notation
$R(r,s)$ used in these works, $R(r,s)=R(K_r,K_s)$.

We prove that a colouring of $K_{17}$ avoiding a red $F_3$ and a blue
$F_4$ has red degrees between seven and ten and contains neither a red
$K_6$ nor a blue $K_8$. A partition into eight and nine vertices that
maximises the number of red cross-edges satisfies further swap
inequalities. Its two induced red graphs belong to finite families
that can be generated by canonical augmentation. This construction
uses automorphism orbits and a canonical deletion rule in the sense
of McKay~\citeyearpar{McKay}, with canonical labels supplied by
BLISS~\citep{BLISS,igraph}.

The resulting seed families have $8,812$ and $115,174$ representatives.
Matching and degree profiles reduce their ordered pairs to a catalogue
of $88,294,674$ gluing problems, each with seventy-two cross-edge
variables. The main search uses Boolean satisfiability, followed by
explicit matching checks. Its forty exceptional pairs have additional
structure: thirty-four satisfy a common cross-edge counting
contradiction, while the remaining six are excluded by finite-domain certificates.
The latter certificates have been checked independently by inspecting
graph edges and replaying domain deletions and exhaustive branches.
Verification of the seed and pair catalogues and the bulk exclusions
has also been completed. The resulting proof of $R(F_3,F_4)=17$ and
the verification procedures are summarised in Section~\ref{sec:conclusion}.

\paragraph{Notation.}
All graphs are finite and simple. For a graph $G$, its vertex and edge
sets are $V(G)$ and $E(G)$, its order is $|G|=|V(G)|$, and its complement
is $\overline G$. For $S\subseteq V(G)$, $G[S]$ is the induced subgraph
on $S$, $G-S=G[V(G)\setminus S]$, and $G-v=G-\{v\}$. Write
$N_G(v)$ for the open neighbourhood of $v$, $d_G(v)=|N_G(v)|$ for its
degree, and $\delta(G)$ and $\Delta(G)$ for the minimum and maximum
degrees. We use $\e(G)=|E(G)|$, $\nuu(G)$ for the maximum matching
size, $\omega(G)$ for the maximum clique size, $o(G)$ for the number
of odd-order connected components, and $\Aut(G)$ for the automorphism
group. The path on $k$ vertices is $P_k$; $\cup$ and
$\mathbin{\dot\cup}$ denote disjoint union when used for graphs and
partitions, respectively. For a finite set $S$, $2^S$ denotes its set
of subsets.

In an edge-colouring, $R$ and $B=\overline R$ are the red and blue
spanning graphs. Their neighbourhoods and degrees are denoted by
$\NR(v),\NB(v),\dR(v),\dB(v)$. For disjoint vertex sets $U,W$,
$\e_R(U,W)$ and $\e_B(U,W)$ count red and blue edges between them.
A colouring is \emph{admissible} if it contains neither a red $F_3$
nor a blue $F_4$. Complements within a seed are taken on its own
vertex set. Vertex, seed, and case indices in the data tables start
at zero. Any further symbols are defined where they are introduced.

\section{Structural restrictions and the ordered eight-plus-nine reduction}
\label{th17:section}

We first establish the lower bound and the structural restrictions used
to cover every possible admissible colouring of order seventeen.

\begin{prop}\label{th17:lower-bound}
There is an admissible colouring of $K_{16}$.  Consequently,
$R(F_3,F_4)\ge17$.
\end{prop}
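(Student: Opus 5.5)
We use the colouring named in the introduction: take two disjoint sets $X$ and $Y$ with $|X|=|Y|=8$. Colour an edge red exactly when it joins $X$ to $Y$, so $R=K_{8,8}$ and $B=2K_8$. It then suffices to check the two fan conditions through the matching criterion of the introduction: a graph contains $F_t$ centred at $v$ if and only if the subgraph induced by $N(v)$ has a matching of size $t$.

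\emph{No red $F_3$.} Let $v\in X$; the case $v\in Y$ is symmetric. Then $\NR(v)=Y$. Since $R$ is bipartite, $Y$ is independent in $R$, so $R[\NR(v)]$ has no edges and $\nuu(R[\NR(v)])=0<3$. Hence no vertex is the centre of a red $F_3$. Equivalently, every red fan contains a red triangle, and the bipartite graph $K_{8,8}$ has none.

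\emph{No blue $F_4$.} Again let $v\in X$. Then $\NB(v)=X\setminus\{v\}$ has seven vertices. Any matching in $B[\NB(v)]$ therefore has at most $\lfloor 7/2\rfloor=3<4$ edges. So no vertex is the centre of a blue $F_4$. Put differently, $F_4$ is connected on nine vertices, while each component of $B$ has only eight vertices.

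The colouring of $K_{16}$ is therefore admissible. By the definition of $R(F_3,F_4)$, this gives $R(F_3,F_4)\ge17$. None of the steps is a real obstacle. The only point that needs care is the parity count $\lfloor 7/2\rfloor=3$, which is exactly why the blue cliques must have order at most eight.
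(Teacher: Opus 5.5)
Your proof is correct and follows the paper's: the same colouring with red graph $K_{8,8}$ and blue graph $2K_8$. The paper argues only that $K_{8,8}$ is triangle-free and that the connected nine-vertex $F_4$ cannot fit inside a blue $K_8$ component, which are exactly the alternative phrasings you give alongside your matching-number checks.
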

\begin{proof}
Partition the vertices into two sets of order eight, colour every edge
between the sets red, and colour every edge within a set blue.  The red
graph is $K_{8,8}$ and has no triangle, hence no $F_3$.  The blue graph
is $K_8\cup K_8$.  Every $F_4$ is connected and has nine vertices,
so it cannot occur in either blue component.
\end{proof}

\subsection{Fans, matchings, and the first degree bound}

\begin{lem}\label{th17:fan-criterion}
A graph $G$ contains an $F_t$ with centre $v$ if and only if
$\nuu(G[N_G(v)])\ge t$.  Consequently, a red--blue colouring is admissible
if and only if, for every vertex $v$,
\begin{equation}\label{th17:local-matching}
 \nuu(R[\NR(v)])\le2,
 \qquad
 \nuu(B[\NB(v)])\le3.
\end{equation}
\end{lem}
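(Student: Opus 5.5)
The plan is to prove the first equivalence by unwinding the definition of $F_t=K_1+tK_2$ as a subgraph. Then I will read the admissibility criterion off it by applying it to the red graph with $t=3$ and to the blue graph with $t=4$.

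For the forward direction, suppose $G$ contains a copy of $F_t$ whose centre is mapped to $v$. Each blade of this copy is an edge $xy$ of $G$ with both $x$ and $y$ adjacent to $v$, since the centre is joined to every blade vertex. So each blade lies in $G[N_G(v)]$. The $t$ blades use $2t$ distinct vertices, because the copy is a subgraph and the vertices of $tK_2$ are pairwise distinct. Hence the blades form a matching of size $t$ in $G[N_G(v)]$, and $\nuu(G[N_G(v)])\ge t$.

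For the converse, take a matching $x_1y_1,\dots,x_ty_t$ of size $t$ in $G[N_G(v)]$; one exists by choosing any $t$ edges of a maximum matching. The $2t$ vertices $x_i,y_i$ are distinct and all different from $v$, since $v\notin N_G(v)$ in a simple graph. The edges $vx_i$, $vy_i$ and $x_iy_i$ all lie in $G$. Together they form a subgraph isomorphic to $K_1+tK_2$ with centre $v$. Note that the definition of copy used here is an ordinary, not necessarily induced, subgraph, so extra edges among the $x_i,y_i$ do not matter.

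For the consequence, a colouring is admissible exactly when $R$ contains no $F_3$ and $B$ contains no $F_4$. Every copy of $F_t$ has some centre $v$, so $R$ has no $F_3$ if and only if $\nuu(R[\NR(v)])\le 2$ for all $v$, and $B$ has no $F_4$ if and only if $\nuu(B[\NB(v)])\le 3$ for all $v$. Both follow from the first part, using that matching numbers are integers, so "not $\ge t$" is the same as "$\le t-1$". This gives \eqref{th17:local-matching}.

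There is no real obstacle. The only point needing care is the distinctness of vertices in the converse: it is guaranteed by the matching being a set of disjoint edges together with $v\notin N_G(v)$.
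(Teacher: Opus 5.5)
Your proof is correct and follows essentially the same argument as the paper: the blades of an $F_t$ centred at $v$ form a $t$-matching in $G[N_G(v)]$, and conversely a $t$-matching together with its $2t$ spokes to $v$ is an $F_t$, with extra edges irrelevant because copies need not be induced. You also write out the deduction of the admissibility criterion (taking $t=3$ in red and $t=4$ in blue, and using that matching numbers are integers), which the paper leaves implicit.
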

\begin{proof}
The $t$ edges opposite the centre in an $F_t$ are pairwise disjoint edges
within the centre's neighbourhood.  Conversely, a matching of size $t$
in that neighbourhood, together with the $2t$ edges from its endpoints
to the centre, is an $F_t$.  Additional edges between the chosen vertices
do not invalidate this subgraph.
\end{proof}

We recall the Tutte--Berge formula in the following form
\citep{MatchingTheory}:
\begin{equation}\label{th17:tutte-berge}
 \nuu(H)=\min_{S\subseteq V(H)}
 \frac{|V(H)|+|S|-o(H-S)}{2}.
\end{equation}
In particular, if $H$ has order $n$ and $\nuu(H)\le k$, then some set
$S\subseteq V(H)$ satisfies
\begin{equation}\label{th17:barrier}
 o(H-S)\ge n+|S|-2k.
\end{equation}
This form will be used to control monochromatic neighbourhoods.

\begin{lem}\label{th17:multipartite}
Let $Q$ be a complete multipartite graph with $q$ vertices, and let $a$
be the size of its largest part.  Then
\begin{equation}\label{th17:multipartite-formula}
 \nuu(Q)=\min\{\lfloor q/2\rfloor,q-a\}.
\end{equation}
\end{lem}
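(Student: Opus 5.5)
We prove the two inequalities $\nuu(Q)\le\min\{\lfloor q/2\rfloor,q-a\}$ and $\nuu(Q)\ge\min\{\lfloor q/2\rfloor,q-a\}$ separately.

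\emph{Upper bound.} Any matching uses $2\nuu(Q)$ distinct vertices, so $\nuu(Q)\le\lfloor q/2\rfloor$. Let $A$ be a largest part. Since $A$ is an independent set, every matching edge has at least one endpoint in $V(Q)\setminus A$, and these endpoints are distinct. Hence $\nuu(Q)\le q-a$. Alternatively, one can take $S=V(Q)\setminus A$ in the Tutte--Berge formula~\eqref{th17:tutte-berge}: then $Q-S$ consists of $a$ isolated vertices, giving $\nuu(Q)\le (q+(q-a)-a)/2=q-a$.

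\emph{Lower bound.} We exhibit a matching of the required size explicitly. List the parts $A_1,\dots,A_r$ in nonincreasing order of size, with $|A_1|=a$. Write the vertices in a sequence $x_1,\dots,x_q$, listing all of $A_1$ first, then all of $A_2$, and so on. Set $m=\lfloor q/2\rfloor$ and pair $x_i$ with $x_{i+m}$ for $i=1,\dots,\min\{m,q-a\}$.

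\emph{Case $a\le \lceil q/2\rceil$.} Then $m\le q-a$, and we use the pairs $\{x_i,x_{i+m}\}$ for $i\le m$. We must check that no pair lies inside a single part. A part occupies a contiguous block of length at most $a$. If $a\le m$, a block cannot contain two indices that differ by $m$. The remaining possibility is $a=m+1$, which forces $q$ odd with $a=\lceil q/2\rceil$. In that subcase, $A_1=\{x_1,\dots,x_{m+1}\}$, and for $i\le m$ the partner $x_{i+m}$ lies in $A_1$ only when $i=1$. We handle this by a shift: pair $x_i$ with $x_{i+m+1}$ for $i=1,\dots,m$. These pairs are disjoint, valid since $i+m+1\le 2m+1=q$, and each pairs a vertex of $A_1$ with a vertex outside $A_1$.

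\emph{Case $a>\lceil q/2\rceil$.} Then $q-a<m$, and we pair $x_i\in A_1$ with $x_{a+i}$ for $i=1,\dots,q-a$. Each such pair matches a vertex of $A_1$ to a distinct vertex outside $A_1$, and every such pair is an edge because $Q$ is complete multipartite.

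\emph{Main obstacle.} The only delicate point is the cyclic-shift pairing in the balanced case, specifically the boundary subcase $a=m+1$. The cleanest way to handle it uniformly is to pair $x_i$ with $x_{i+\lceil q/2\rceil}$ for $i\le m$. These indices are distinct, and since every part has size at most $\lceil q/2\rceil$, no two indices differing by exactly $\lceil q/2\rceil$ lie in the same contiguous block. This argument covers both subcases at once, so I would present it in that form.
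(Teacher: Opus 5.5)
Your proof is correct. The upper bound is the same as the paper's: endpoint counting, plus the fact that every edge has an endpoint outside a largest part. The case $a>\lceil q/2\rceil$ also matches the paper, which matches the complement of a largest part into it whenever $a\ge q/2$.

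The routes differ in the balanced case. There the paper argues by induction on $q$. It matches one vertex from each of two largest parts, then checks separately for $q$ even and $q$ odd that every remaining part still has size at most $\lceil (q-2)/2\rceil$. You replace this induction with an explicit pairing. You list the parts as contiguous blocks and join $x_i$ to $x_{i+\lceil q/2\rceil}$ for $i\le\lfloor q/2\rfloor$. This works because a block of length at most $\lceil q/2\rceil$ contains no two indices at distance exactly $\lceil q/2\rceil$. Your version avoids the parity bookkeeping and the invariant check, and gives the matching in closed form. The paper's version has the merit of describing a natural greedy procedure.

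In fact your two cases merge into one. Put $s=\max\{a,\lceil q/2\rceil\}$ and pair $x_i$ with $x_{i+s}$ for $1\le i\le q-s=\min\{\lfloor q/2\rfloor,q-a\}$. The pairs are disjoint because $q-s\le s$, and each pair meets two different parts because every block has length at most $a\le s$.

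In a final write-up, drop three things: the preliminary shift by $\lfloor q/2\rfloor$, which is superseded by your case analysis; the separate $a=m+1$ subcase, which the uniform shift makes redundant; and the word ``cyclic'', since the shift is linear.
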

\begin{proof}
The first upper bound counts endpoints.  The second holds because every
edge has an endpoint outside a largest part.  If $a\ge q/2$, match every
vertex outside a largest part to a distinct vertex inside it; this
attains $q-a$.

For the other case, it suffices to show that a complete multipartite
graph whose largest part has size at most $\lceil q/2\rceil$ has a
matching covering all but at most one vertex.  We argue by induction
on $q$.  The assertion is immediate for $q\le1$.  For $q\ge2$, choose
one vertex from each of two largest nonempty parts and match them.
After these vertices are removed, every remaining part has size at most
$\lceil(q-2)/2\rceil$.  To check this when $q=2r$, a part of size $r$
must be among the chosen largest parts, and there can be at most two
such parts.  When $q=2r+1$, a part of size $r+1$, if present, is unique
and loses a vertex; all other parts have size at most $r$.  The induction
hypothesis supplies the remaining matching edges.
\end{proof}

The following matching--fan Ramsey formula is due to Lin and Li~\citeyearpar{LinLi}; it is also stated
as Lemma~2.1 of Zhang et~al.~\citeyearpar{ZhangBroersmaChen}.
We include a proof from~\eqref{th17:tutte-berge} to make explicit the
Ramsey input used in the degree bounds.

\begin{thm}\label{th17:matching-fan}
For positive integers $s,t$,
\begin{equation}\label{th17:matching-fan-formula}
 R(F_t,sK_2)=\max\{s,t\}+s+t.
\end{equation}
\end{thm}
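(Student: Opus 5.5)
We prove the two inequalities separately. Throughout, $R(F_t,sK_2)$ means: every red--blue colouring of $K_N$ has a red $F_t$ or a blue matching of size $s$. Write $N_0=\max\{s,t\}+s+t$.

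\emph{Lower bound.} We give colourings of $K_{N_0-1}$ with neither structure, one for each candidate in the maximum.

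For the term $2s+t$, take a red clique $A$ of order $2s-1$ and a set $C$ of $t$ further vertices. Colour every edge inside $C$ and every edge between $A$ and $C$ blue. Every blue edge meets $C$, so a blue matching has size at most $t$. This is not enough on its own, so I would instead use the following colouring.

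Take a red $K_{2s-1}$ on $A$ and a set $C$ of $t$ vertices with all edges inside $C$ and between $A$ and $C$ coloured blue. Then every blue edge meets $C$, but $C$ has size $t$, so this only gives blue matching size at most $t$. Since blue is the matching colour, the blue graph must have $\nu\le s-1$. Hence take $C$ of size $s-1$ joined in blue to everything, and make the remaining $2t+\max\{s,t\}-s$ vertices form a red graph with no $F_t$ and no blue edge among them.

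For $s\ge t$, the remaining part has $2t$ vertices, and a red $K_{2t}$ has neighbourhoods $K_{2t-1}$ with $\nu=t-1$, so it contains no $F_t$. In total this uses $s-1+2t$ vertices, but we need $2s+t-1$. Since $s\ge t$ gives $2t+s-1\le 2s+t-1$, this falls short. The better choice is a red $K_{2s-1}$ together with $t$ vertices joined in blue to everything. Its red neighbourhoods have order $2s-2$, so $\nu=s-1$, which is below $t$ only if $s\le t$.

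So the construction has to be tuned per case. For $s\le t$ (target $N_0-1=s+2t-1$), use a red $K_{2t}$ plus $s-1$ vertices whose edges are all blue: the red neighbourhoods are $K_{2t-1}$ with $\nu=t-1$, and $\nu(B)\le s-1$ because every blue edge meets the $(s-1)$-set. This gives exactly $s+2t-1$ vertices.

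For $s>t$ (target $2s+t-1$), I would make the red graph $\overline{K_{2s-1}}$-free of blue edges in a different way. Take the blue graph to be $K_{t}\vee$ with blue matching at most $s-1$ by Tutte--Berge: let $S$ be a set of $t-1$ vertices, say, and let the rest be red cliques of odd order. The intended shape is a red $K_{2s-1}$ whose red neighbourhoods are $K_{2s-2}$, which contain $F_t$. Red must therefore be sparse, forcing each red neighbourhood to have $\nu<t$; a union of red $K_{2t}$'s or a red complete multipartite graph, handled with Lemma~\ref{th17:multipartite}, is the natural candidate. Finding the right red graph here is the step I am least sure of, and the rest of the plan does not depend on it.

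\emph{Upper bound.} Let $N=N_0$ and suppose $\nu(B)\le s-1$. By \eqref{th17:barrier} there is $S$ with $o(B-S)\ge N+|S|-2(s-1)$, where $|S|\le s-1$. Components of $B-S$ are separated from each other, so all edges between distinct components are red. Let $p$ be the number of odd components and pick one vertex from each; these form a red clique.

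The number of components satisfies $p\ge N-2s+2+|S|\ge \max\{s,t\}+t-s+2$. More usefully, the vertices outside $S$ number $N-|S|\ge N-s+1$, and they are covered by at least $N-2s+2+|S|$ components. Choose a vertex $v$ in a smallest component. It is red-adjacent to every vertex outside its own component and outside $S$. Its red neighbourhood then contains a complete multipartite graph whose parts are the other components. By Lemma~\ref{th17:multipartite}, its matching number is at least $\min\{\lfloor q/2\rfloor,q-a\}$ with $q\ge N-|S|-|C_v|$.

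The main obstacle is bounding $a$, the largest component size, against $q$ so that this minimum reaches $t$. Component sizes are controlled by the excess count: the number of vertices in non-singleton structure is at most $2(s-1-|S|)$ beyond the singletons. I expect a short calculation using $N=\max\{s,t\}+s+t$ to give $q-a\ge t$ and $\lfloor q/2\rfloor\ge t$. That yields a red $F_t$ via Lemma~\ref{th17:fan-criterion}.
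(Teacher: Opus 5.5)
Your case $s\le t$ of the lower bound (a red $K_{2t}$ plus $s-1$ vertices all of whose edges are blue) matches the paper. So does the skeleton of your upper bound: a Tutte--Berge barrier $S$, a vertex $v$ in a smallest component $C_v$, and the complete multipartite graph on the other components inside $N_R(v)$.

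The genuine gap is the lower bound for $s>t$. The theorem needs a colouring of $K_{2s+t-1}$ with no red $F_t$ and blue matching number at most $s-1$. You leave this open and say the rest of the plan does not depend on it. But without it you only have $R(F_t,sK_2)\ge s+2t$, which is strictly weaker than the claim when $s>t$. The missing colouring is the colour reversal of your very first attempt:
\begin{itemize}
\item Take $A\cup C$ with $|A|=2s-1$ and $|C|=t$. Colour the edges inside $A$ blue and every other edge red.
\item The blue graph is $K_{2s-1}\cup tK_1$, with matching number $s-1$. The red graph is $\overline{K_{2s-1}}\vee K_t$.
\item A centre in $A$ has red neighbourhood $K_t$, whose matching number is $\lfloor t/2\rfloor<t$.
\item For a centre $c\in C$, $A$ is red-independent, so every edge of its red neighbourhood meets $C\setminus\{c\}$. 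Its matching number is therefore at most $t-1$.
\end{itemize}

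The upper bound is also unfinished: the final inequalities are only expected. Bounding the largest part $a$ through the excess count is not the efficient route; count parts instead.
\begin{itemize}
\item Every component other than $C_v$ is a nonempty part, and there are at least $p-1\ge t+|S|+1$ of them. Deleting a largest part leaves at least $p-2\ge t$ vertices, so $q-a\ge t$.
\item For $\lfloor q/2\rfloor\ge t$, split on $|C_v|$. If $|C_v|=1$, then $q=N-|S|-1\ge\max\{s,t\}+t\ge2t$. This uses $|S|\le s-1$, which follows from $N+|S|-2s+2\le p\le N-|S|$.
\item If $|C_v|\ge2$, minimality forces every odd component to have at least three vertices. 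Hence $q\ge3(p-1)\ge3(t+1)>2t$.
\end{itemize}
Lemma~\ref{th17:multipartite} then supplies the red $t$-matching, and Lemma~\ref{th17:fan-criterion} gives the red $F_t$ centred at $v$.
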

\begin{proof}
Put $M=\max\{s,t\}$ and $n=M+s+t$.  Suppose that the blue graph $H$
of a colouring of $K_n$ has no matching of size $s$.  By
\eqref{th17:barrier}, there is a set $S$, of size $k$, for which
\[
 p:=o(H-S)\ge n+k-2s+2
   =M-s+t+k+2\ge t+k+2.
\]
Since $p\le n-k$, we also have $k\le s-1$.  Let $C$ be an odd component
of $H-S$ of smallest order, choose $x\in C$, and put
$W=V(H)\setminus(S\cup C)$.  All vertices in $W$ are red neighbours of
$x$.  Moreover, the red graph on $W$ contains the complete multipartite
graph whose parts are the remaining components of $H-S$.

There are at least $p-1$ such parts.  Even after removing a largest part,
at least $p-2\ge t$ vertices remain.  Also $|W|\ge2t$: if $|C|=1$, then
\[
 |W|=n-k-1\ge M+t\ge2t;
\]
if $|C|\ge3$, each of the other $p-1$ odd components has at least three
vertices, and $|W|\ge3(p-1)\ge3(t+1)>2t$.  Lemma~\ref{th17:multipartite}
therefore gives a red matching of size $t$ in $W$.  Together with $x$
it forms a red $F_t$, proving the upper bound.

For the lower bound when $s\ge t$, take a red graph obtained by joining
an independent set of order $2s-1$ to a clique of order $t$.  This graph
has $2s+t-1$ vertices.  Its blue complement is $K_{2s-1}\cup tK_1$ and
has matching number $s-1$.  At a red centre in the independent set,
the red neighbourhood is $K_t$ and has matching number $\lfloor t/2\rfloor<t$.
At a red centre in the clique, every edge within its red neighbourhood
meets one of the other $t-1$ clique vertices, so the matching number is
at most $t-1$.  Thus no red $F_t$ occurs.

When $t\ge s$, take the red graph $K_{2t}\cup(s-1)K_1$, on $2t+s-1$
vertices.  Its nontrivial component has fewer than $2t+1$ vertices, so
it contains no $F_t$.  In its blue complement every edge meets the set
of $s-1$ vertices corresponding to the red isolated vertices.  Hence
the blue matching number is at most $s-1$.  Both constructions give
the required lower bound.
\end{proof}

\begin{lem}\label{th17:degree-ten}
In every admissible colouring, $\dR(v)\le10$ and $\dB(v)\le10$ for
every vertex $v$.
\end{lem}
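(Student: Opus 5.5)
The plan is to apply Theorem~\ref{th17:matching-fan} inside a monochromatic neighbourhood of $v$, using Lemma~\ref{th17:fan-criterion} to turn fans centred at $v$ into matchings inside that neighbourhood. Both bounds follow from the same argument with the colours exchanged, and the numbers work out exactly: the threshold is $11$ in both cases.

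For the red bound, suppose $\dR(v)\ge11$ and consider the colouring that the admissible colouring induces on a set $W\subseteq\NR(v)$ with $|W|=11$. By Lemma~\ref{th17:fan-criterion}, admissibility gives $\nuu(R[\NR(v)])\le2$, so $W$ contains no red $3K_2$. $W$ also contains no blue $F_4$, since the whole colouring has none. Now apply Theorem~\ref{th17:matching-fan} with $t=4$ and $s=3$, after exchanging the roles of the two colours (the theorem is symmetric under renaming colours). It gives
\[
R(F_4,3K_2)=\max\{3,4\}+3+4=11,
\]
so every colouring of $K_{11}$ has a blue $F_4$ or a red $3K_2$. This is a contradiction, hence $\dR(v)\le10$.

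For the blue bound, suppose $\dB(v)\ge11$ and take $W\subseteq\NB(v)$ with $|W|=11$. By~\eqref{th17:local-matching}, $\nuu(B[\NB(v)])\le3$, so $W$ has no blue $4K_2$, and it has no red $F_3$. Theorem~\ref{th17:matching-fan} in its original colouring, with $t=3$ and $s=4$, gives
\[
R(F_3,4K_2)=4+4+3=11,
\]
which is again a contradiction. Hence $\dB(v)\le10$.

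There is no real obstacle. The only point needing care is the colour exchange in the red case: the theorem is stated with a red fan and a blue matching, but it holds equally with the colours swapped, which is what the red case uses. The argument does not use the order of the complete graph, so it applies to every admissible colouring.
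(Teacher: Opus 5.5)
Your proof is correct and is essentially the paper's argument. The paper restricts to eleven red or blue neighbours of $v$ and applies Theorem~\ref{th17:matching-fan} as $R(3K_2,F_4)=11$ and $R(F_3,4K_2)=11$; your colour exchange from $R(F_4,3K_2)$ is exactly the symmetry the paper uses implicitly, and the resulting matching gives a fan centred at $v$ by Lemma~\ref{th17:fan-criterion}.
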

\begin{proof}
Theorem~\ref{th17:matching-fan} gives
\[
 R(3K_2,F_4)=11,
 \qquad R(F_3,4K_2)=11.
\]
If $v$ has eleven red neighbours, the colouring on any eleven of them
has either a red $3K_2$, which gives a red $F_3$ centred at $v$, or a
blue $F_4$.  If $v$ has eleven blue neighbours, the colouring on eleven
of them has either a red $F_3$ or a blue $4K_2$, the latter giving a
blue $F_4$ centred at $v$.  All alternatives contradict admissibility.
\end{proof}

\subsection{Excluding a blue clique of order eight}

\begin{lem}\label{th17:even-matching}
A graph $G$ on $2m$ vertices with minimum degree at least $m$ has a perfect
matching.
\end{lem}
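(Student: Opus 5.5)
This is the classical Dirac-type statement that a graph on $2m$ vertices with minimum degree at least $m$ has a Hamiltonian cycle, and an even cycle through all vertices yields a perfect matching by taking alternate edges. Since the paper has just recalled the Tutte--Berge formula \eqref{th17:tutte-berge}, I would instead argue directly from its barrier form \eqref{th17:barrier}. This keeps the proof self-contained and in the same style as Theorem~\ref{th17:matching-fan}.

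Suppose $G$ has $2m$ vertices, $\delta(G)\ge m$, and no perfect matching, so $\nuu(G)\le m-1$. With $n=2m$ and $k=m-1$, inequality \eqref{th17:barrier} gives a set $S$ with $o(G-S)\ge |S|+2$. Put $s=|S|$ and $p=o(G-S)\ge s+2$.

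First dispose of $S=\emptyset$. Then $G$ has at least two odd components. A smallest one has at most $m$ vertices, so one of its vertices has degree at most $m-1$, a contradiction.

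In general, let $C$ be a smallest odd component of $G-S$ and take $x\in C$. All neighbours of $x$ lie in $(C\setminus\{x\})\cup S$, so $m\le d_G(x)\le |C|-1+s$. The other $p-1\ge s+1$ odd components each have at least $|C|$ vertices. Counting vertices gives
\[
2m\ge s+p|C|\ge s+(s+2)|C|.
\]
Combining this with $|C|\ge m-s+1$ yields
\[
2m\ge s+(s+2)(m-s+1).
\]
Expanding, this becomes $0\ge sm-s^2+s+2$, that is, $s(m-s+1)+2\le 0$. Since $p\ge s+2$ odd components fit in $2m-s$ vertices, we have $s\le m-1$, so both factors are positive and the inequality fails. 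This contradiction proves the lemma.

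The only delicate point is bookkeeping for the bound $s\le m-1$ and the positivity of $m-s+1$, which follow from $s+p\le 2m$ and $p\ge s+2$. Alternatively, I would simply cite Dirac's theorem on Hamiltonian cycles as a one-line fallback.
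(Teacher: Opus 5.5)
Your argument is correct apart from one algebra slip, and it takes a different route from the paper. The expansion is $s+(s+2)(m-s+1)=2m+s(m-s)+2$, so the final inequality is $s(m-s)+2\le 0$, not $s(m-s+1)+2\le 0$. Since $0\le s\le m-1$ gives $s(m-s)\ge 0$, the contradiction survives. Also, ``both factors positive'' should read ``nonnegative'' to cover $s=0$, although you treated that case separately.

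The paper does not use Tutte--Berge here. It takes a maximum matching $M$ with two unmatched vertices $u,w$. It observes that all their neighbours are covered by $M$, and that each edge $ab\in M$ receives at most two edges from $\{u,w\}$, since three would let you replace $ab$ by two disjoint edges $ua,wb$ or $ub,wa$. It concludes $2m\le d_G(u)+d_G(w)\le 2|M|\le 2m-2$.

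Your version reuses the barrier form \eqref{th17:barrier} exactly as in Theorem~\ref{th17:matching-fan} and Lemma~\ref{th17:ten-classification}. That gives stylistic uniformity, but it makes the lemma depend on the cited Tutte--Berge formula. The paper's exchange argument is fully elementary. Because $u$ and $w$ are necessarily nonadjacent, it also proves a stronger Ore-type statement: $d_G(u)+d_G(w)\ge 2m-1$ for every nonadjacent pair already forces a perfect matching.

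Your Dirac fallback is also fine, provided you note that Dirac's theorem needs $2m\ge 3$, so $m\le 1$ must be checked by hand.
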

\begin{proof}
Let $M$ be a maximum matching and suppose that distinct vertices $u,w$
are unmatched.  Their neighbours are all covered by $M$, since an edge
between unmatched vertices would enlarge $M$.  For each edge $ab\in M$,
there are at most two edges from $\{u,w\}$ to $\{a,b\}$.  Otherwise
one can replace $ab$ by two disjoint edges joining $u,w$ to $a,b$ and
enlarge $M$.  The degree hypothesis now yields the contradiction
\[
 2m\le d_G(u)+d_G(w)\le2|M|\le2m-2.\qedhere
\]
\end{proof}

\begin{lem}\label{th17:no-blue-k8}
An admissible colouring of order $N\ge17$ contains no blue $K_8$.
\end{lem}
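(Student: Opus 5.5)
We argue by contradiction. Suppose an admissible colouring of order $N\ge17$ contains a blue clique $K$ with $|K|=8$, and put $W=V\setminus K$, so $|W|\ge9$. Every fan-type contradiction will come from Lemma~\ref{th17:fan-criterion}, and the plan is to squeeze the cross-edges between $K$ and $W$ from both colours.

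\emph{Step 1: each outside vertex has at most one blue neighbour in $K$.} Let $w\in W$ and suppose $w$ is blue to two vertices $u,v\in K$. At the centre $v$, the blue neighbourhood contains $w$ and $K\setminus\{v\}$. The blue edge $wu$ together with three disjoint blue edges inside the blue $K_6$ on $K\setminus\{u,v\}$ is a blue $4K_2$. That is a blue $F_4$. Hence every $w\in W$ has at least seven red neighbours in $K$. Since $K$ is blue, these red neighbours are independent in $R$.

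\emph{Step 2: each outside vertex has red degree at most $2$ inside $W$.} Let $w\in W$ have three red neighbours $w_1,w_2,w_3\in W$. Each $w_i$ has at least six red neighbours in $K$ that are also red neighbours of $w$, because both miss at most one vertex of $K$. Choose distinct $k_1,k_2,k_3\in K$ greedily with $w_ik_i$ red. Then $w_1k_1,w_2k_2,w_3k_3$ is a red $3K_2$ in $\NR(w)$, giving a red $F_3$. So $R[W]$ has maximum degree at most $2$, and $B[W]$ has minimum degree at least $|W|-3$.

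\emph{Step 3: the final contradiction, which I expect to be the main obstacle.} For $|W|\ge11$ the conclusion is immediate. A vertex $w$ has at least $8$ blue neighbours in $W$. On any $8$ of them the induced blue graph has minimum degree at least $8-3=5\ge4$, since each vertex loses at most two red neighbours and itself. Lemma~\ref{th17:even-matching} then gives a blue perfect matching, that is, a blue $4K_2$ in $\NB(w)$.

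The tight cases are $|W|\in\{9,10\}$, with $N=17$ the critical one. Here I would switch to centres in $K$. By Step~1 there are at least $7|W|\ge63$ red edges between $W$ and $K$, so some $k\in K$ has a set $W'\subseteq W$ of at least $8$ red neighbours. By Lemma~\ref{th17:fan-criterion} we need $\nu(R[W'])\le2$, and $R[W']$ has maximum degree at most $2$.

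The plan is to split on the number of red edges inside $W$:
\begin{itemize}
\item[(a)] If $R[W]$ is close to $2$-regular, then $R[W']$ is a union of paths and cycles on at least $8$ vertices with at least $|E(R[W])|-4$ edges. Such a graph has a matching of size $3$, so $k$ is the centre of a red $F_3$.
\item[(b)] If $R[W]$ has few edges, some $w$ has red degree at most $1$ in $W$, hence blue degree at least $|W|-2\ge7$ in $W$. I then look for a blue $4K_2$ in $\NB(w)$, using Lemma~\ref{th17:even-matching} on an even subset. If necessary I add the possible blue neighbour of $w$ in $K$, or pick $w$ with two blue-degree-$7$ neighbours.
\end{itemize}
Balancing cases (a) and (b) exactly is the delicate point, and the first thing to try is an edge count on $R[W]$ relative to $|W'|$. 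If a gap remains, Lemma~\ref{th17:degree-ten} can also be used: each $k\in K$ has red degree at most $10$, which caps the number of red edges from $K$ to $W$ at $80$. This forces many $k$ with $|W'|\ge8$, so we can choose $k$ so that $W'$ avoids the vertices of low red degree.
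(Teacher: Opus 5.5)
Your Steps 1 and 2 are exactly the paper's first two steps, and your treatment of $|W|\ge11$ is correct. The case $|W|=10$ needs no separate work: deleting a vertex of $W$ preserves admissibility, so it reduces to $|W|=9$.

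\textbf{The gap is Step 3 at $|W|=9$.} It is only a plan, and the device it rests on, a single centre $k\in K$ with at least eight red neighbours in $W$, cannot close the argument. Write $J=R[W]$ and take $J=3P_3$, or $J=2K_3\cup P_3$. The latter has eight edges, so it should fall under your case (a) as ``close to $2$-regular''. Let $x$ be the middle vertex of a $P_3$ component and $W'=W\setminus\{x\}$. Then $\nu(R[W'])=2$, so $k$ is not the centre of a red $F_3$. Case (b) fails too. An end vertex $w$ of a $P_3$ has only seven blue neighbours in $W$, and seven vertices cannot carry a blue $4K_2$. Nothing you have established prevents its possible blue neighbour in $K$ from being red to all seven. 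Your edge-count split between (a) and (b) therefore does not separate the hard configurations. The fallback via Lemma~\ref{th17:degree-ten} is vacuous: every red neighbour of $k\in K$ lies in $W$, so $\dR(k)\le|W|\le10$ holds automatically.

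\textbf{The missing idea is to use the bound $e_B(K,W)\le9$ from Step 1 against all eight centres of $K$ simultaneously.} The argument runs as follows.
\begin{enumerate}
\item First, $\delta(J)\ge1$. A vertex isolated in $J$ has eight blue neighbours in $W$, and Lemma~\ref{th17:even-matching} applies exactly as in your $|W|\ge11$ argument. This is the red-degree-$0$ part of your (b).
\item Hence $J$ is a disjoint union of paths and cycles of order at least two. It follows that $\nu(J)=(9-q)/2\ge3$, where $q\le3$ is the number of odd components.
\item Therefore every $k\in K$ has a blue neighbour in $W$.
\item If $\nu(J)\ge4$, every $k$ needs at least two blue neighbours in $W$. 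That gives $e_B(K,W)\ge16$, a contradiction. So $\nu(J)=3$.
\item Then $q=3$, which forces $J$ to be three components, each a $P_3$ or a $K_3$.
\item The only vertices $x$ with $\nu(J-x)\le2$ are the middles of the $P_3$ components, so there are at most three of them.
\item Nine blue cross-edges shared among eight vertices, each receiving at least one, leave at least seven $k\in K$ with a unique blue neighbour $x_k$.
\item These $x_k$ are distinct by Step 1. Each satisfies $\nu(J-x_k)\le2$, because $J-x_k$ lies in the red neighbourhood of $k$.
\end{enumerate}
Seven such vertices would be needed but at most three exist, which is the contradiction. This global count, rather than a local analysis at one vertex, is what finishes the critical case.
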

\begin{proof}
If such a clique occurs for $N>17$, retain the clique and any nine other
vertices.  Thus it suffices to consider $N=17$.  Suppose that $C$ is a
blue clique of order eight, and let $X=V(K_{17})\setminus C$, so $|X|=9$.

Every $x\in X$ has at most one blue neighbour in $C$.  Indeed, if
$xc_1$ and $xc_2$ are blue, the blue neighbourhood of $c_1$ contains
the edge $xc_2$ and three disjoint edges on the six vertices of
$C\setminus\{c_1,c_2\}$.  This is a blue $F_4$ centred at $c_1$.
Consequently,
\begin{equation}\label{th17:k8-cross-count}
 \e_B(C,X)\le9,
 \qquad |C\cap\NR(x)|\ge7\quad(x\in X).
\end{equation}

Put $J=R[X]$.  First, $\Delta(J)\le2$.  Otherwise choose a vertex $x$
and three distinct red neighbours $y_1,y_2,y_3$ of $x$ within $X$.
For each $i$, inclusion--exclusion in $C$ gives
\[
 |C\cap\NR(x)\cap\NR(y_i)|\ge7+7-8=6.
\]
Choose distinct $z_i$ from these three sets, successively.  All six
vertices $y_i,z_i$ are red neighbours of $x$, and the edges $y_iz_i$
are a red matching of size three, a contradiction.

Second, $\delta(J)\ge1$.  If $x$ were isolated in $J$, all eight vertices
of $X\setminus\{x\}$ would be blue neighbours of $x$.  Their blue
graph has minimum degree at least $7-2=5$, since $\Delta(J)\le2$.
By Lemma~\ref{th17:even-matching} it has a perfect matching, giving a
blue $F_4$ centred at $x$.

Every component of $J$ is therefore a path of order at least two or a
cycle of order at least three.  Such a component of order $a$ has
matching number $\lfloor a/2\rfloor$.  If $q$ is the number of its
odd components, then
\begin{equation}\label{th17:k8-matching-count}
 \nuu(J)=\frac{9-q}{2}\ge3,
\end{equation}
because every odd component has at least three vertices and hence
$q\le3$.

If $\nuu(J)\ge4$, a vertex $c\in C$ with at most one blue neighbour in
$X$ would have a red neighbourhood containing $J$ with at most one
vertex deleted.  Deleting one vertex reduces matching number by at
most one, so that neighbourhood would contain a red $3K_2$.  Thus every
vertex of $C$ would have at least two blue neighbours in $X$, giving
$\e_B(C,X)\ge16$, contrary to~\eqref{th17:k8-cross-count}.  We conclude
that $\nuu(J)=3$.

Equation~\eqref{th17:k8-matching-count} now forces exactly three odd
components.  Each has order at least three, and together they already
account for all nine vertices.  Hence $J$ is the disjoint union of
three graphs, each equal to $P_3$ or $K_3$.  Among its vertices, those
$x$ for which $\nuu(J-x)\le2$ are precisely the middle vertices of
the $P_3$ components.  In particular, there are at most three such
vertices.

Every $c\in C$ must have at least one blue neighbour in $X$, since
otherwise $J$ itself supplies a red $3K_2$ in its red neighbourhood.
Distributing at most nine blue cross-edges among the eight vertices
of $C$ shows that at least seven of them have exactly one blue neighbour
in $X$.  If the unique neighbour of $c$ is $x$, then $J-x$ lies in the
red neighbourhood of $c$, so $\nuu(J-x)\le2$.  These seven vertices
$x$ must be distinct, since each vertex of $X$ has at most one blue
neighbour in $C$.  This contradicts the upper bound of three just proved.
\end{proof}

\subsection{Ten-vertex neighbourhoods and the final degree interval}

\begin{lem}\label{th17:ten-classification}
Let $H$ be a graph on ten vertices with $\nuu(H)\le3$.  If
$\overline H$ contains no $F_3$, then
\[
 H\cong K_7\cup3K_1.
\]
\end{lem}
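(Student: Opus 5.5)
The plan is to apply the Tutte--Berge barrier~\eqref{th17:barrier} to $H$ and then use Lemma~\ref{th17:multipartite} on the complement to kill every barrier configuration except one. With $n=10$ and $k=3$, \eqref{th17:barrier} yields a set $S$ with $|S|=s$ and $p:=o(H-S)\ge s+4$. Since the $p$ odd components are disjoint subsets of the $10-s$ vertices outside $S$, we get $s+4\le p\le 10-s$, so $s\le3$.

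The basic tool is the following observation. Take a vertex $x$ in some component $C$ of $H-S$, and put $W=V(H)\setminus(S\cup C)$. Then every vertex of $W$ is a neighbour of $x$ in $\overline H$. Moreover, $\overline H[W]$ contains the complete multipartite graph whose parts are the other components of $H-S$, including any even ones. Since $\overline H$ has no $F_3$, Lemma~\ref{th17:fan-criterion} and Lemma~\ref{th17:multipartite} force
\[
\min\{\lfloor |W|/2\rfloor,\ |W|-a\}\le 2,
\]
where $a$ is the largest remaining part. Choosing $x$ in a smallest odd component, often a singleton, makes $|W|$ large, so we need $|W|-a\le2$: all but at most two vertices of $W$ must lie in one component.

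I will run through $s=3,2,1,0$ and the possible multisets of component orders with at least $s+4$ odd parts.
\begin{itemize}
\item For $s=3$, all seven outside vertices are singletons. Then $W$ consists of six singleton parts, giving a complement matching of size $3$.
\item For $s=2$ and $s=1$, there are at least six, respectively five, odd components on $8$, respectively $9$, vertices. In each case the largest part of $W$ is at most $5$, while $|W|\ge7$, so $|W|-a\ge3$.
\item For $s=0$, there are at least four odd components on ten vertices. Testing $x$ in a singleton component excludes every split except the one with components of orders $7,1,1,1$. For instance, $5,3,1,1$ gives $|W|-a=4$, and splits with even components only increase $|W|-a$. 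Note that a component of order $7$ forces at least three singletons because of the parity count $p\ge4$.
\end{itemize}
This bookkeeping is routine but needs care with mixtures of even and odd components. I would organise it by the order of the largest component.

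The remaining step, which I expect to be the only genuinely non-counting point, is to show that the seven-vertex component $D$ is complete. Suppose instead that $yz$ is an edge of $\overline H$ with $y,z\in D$. Let $t$ be one of the three singletons. Its $\overline H$-neighbourhood is all nine other vertices. It contains the edge $yz$, and also the two edges joining the other two singletons to two further vertices $u,w\in D\setminus\{y,z\}$. These three edges are disjoint, giving an $F_3$ in $\overline H$ centred at $t$, a contradiction. Hence $D=K_7$ and $H\cong K_7\cup3K_1$. Finally, I would check the consistency of this configuration: its matching number is $3$, and its complement is $K_{3}+7K_1$, a join in which each centre's neighbourhood has matching number at most $2$.
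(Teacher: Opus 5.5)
Your proposal is correct and is essentially the paper's proof: the barrier set with $s\le3$, a singleton odd component $\{x\}$ whose $\overline H$-neighbourhood contains the complete multipartite graph on the other components (which rules out $s\ge1$ and forces the split $7,1,1,1$ when $s=0$), and the same three-disjoint-edges argument showing that the order-seven component is complete. Two small repairs are needed. First, a singleton odd component always exists, since odd components of order at least three would need $3(s+4)>10-s$ vertices. Second, the numbers in your $s\in\{1,2\}$ bullet should be $|W|=7$, $a\le3$ for $s=2$ and $|W|=8$, $a\le5$ for $s=1$; as written, $|W|\ge7$ with $a\le5$ only gives $|W|-a\ge2$.
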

\begin{proof}
By~\eqref{th17:barrier}, there is a set $S$, of size $s$, such that
$o(H-S)\ge4+s$.  Since $o(H-S)\le10-s$, we have $0\le s\le3$.
At least one odd component of $H-S$ is a singleton: otherwise its
odd components alone would use at least $3(4+s)>10-s$ vertices.
Let $\{x\}$ be a singleton component.

Set $W=V(H)\setminus(S\cup\{x\})$, of order $9-s$.  Every vertex of
$W$ is adjacent to $x$ in $\overline H$.  The complement on $W$ contains
the complete multipartite graph whose parts are the remaining components
of $H-S$.  The largest component of $H-S$ has size at most $7-2s$:
if it is odd, the other at least $3+s$ odd components each need a
vertex; if it is even, the at least $4+s$ odd components give an even
stronger bound.  When $1\le s\le3$, the multipartite graph on $W$
therefore has
\[
 |W|=9-s\ge6,
 \qquad
 |W|-\text{(largest part size)}\ge2+s\ge3.
\]
Lemma~\ref{th17:multipartite} gives a matching of size three in the
neighbourhood of $x$ in $\overline H$, a contradiction.  Hence $s=0$.

If all components of $H$ had order at most six, the same argument on
$W=V(H)\setminus\{x\}$, now of order nine, would again give a
3-matching in $\overline H[N_{\overline H}(x)]$.  Thus $H$ has a
component $D$ of order at least seven.  The bound $o(H)\ge4$ forces
$|D|=7$, with precisely three other components, all singletons.

The component $D$ must be complete.  If $a,b\in D$ were nonadjacent
in $H$, denote the three singleton vertices by $x,y,z$ and choose
distinct $c,d\in D\setminus\{a,b\}$.  In $\overline H$, the three
edges $ab,yc,zd$ are disjoint and all their endpoints are neighbours
of $x$.  They give an $F_3$ centred at $x$, a final contradiction.
\end{proof}

\begin{cor}\label{th17:final-degrees}\label{cor:degree17}
In an admissible colouring of $K_N$ with $N\ge17$,
\begin{equation}\label{th17:general-degrees}
 \dB(v)\le9,
 \qquad N-10\le\dR(v)\le10.
\end{equation}
In particular, on seventeen vertices,
\begin{equation}\label{th17:seventeen-degrees}
 7\le\dR(v)\le10,
 \qquad 6\le\dB(v)\le9.
\end{equation}
\end{cor}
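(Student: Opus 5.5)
The plan is to derive the corollary from Lemma~\ref{th17:degree-ten}, Lemma~\ref{th17:ten-classification} and Lemma~\ref{th17:no-blue-k8}. Lemma~\ref{th17:degree-ten} already gives $\dR(v)\le10$ and $\dB(v)\le10$. The remaining tasks are to exclude $\dB(v)=10$, and then to obtain the lower red bound from $\dR(v)+\dB(v)=N-1$.

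Suppose some vertex $v$ has exactly ten blue neighbours, and let $H=B[\NB(v)]$, a graph on ten vertices. By Lemma~\ref{th17:fan-criterion}, admissibility at the centre $v$ gives $\nuu(H)\le3$. The complement of $H$ on the same ten vertices is $R[\NB(v)]$. Any $F_3$ in it would be a red $F_3$ in the colouring, so $\overline H$ contains no $F_3$. Lemma~\ref{th17:ten-classification} then forces $H\cong K_7\cup3K_1$. In particular the blue graph contains a $K_7$ inside $\NB(v)$, and $v$ is blue-adjacent to all seven of its vertices. Adding $v$ gives a blue $K_8$. Since $N\ge17$, this contradicts Lemma~\ref{th17:no-blue-k8}. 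Hence $\dB(v)\le9$ for every vertex.

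Because $\dR(v)=N-1-\dB(v)$, we get $\dR(v)\ge N-10$. Together with $\dR(v)\le10$ from Lemma~\ref{th17:degree-ten}, this gives \eqref{th17:general-degrees}. Setting $N=17$ yields $7\le\dR(v)\le10$. Then $\dB(v)=16-\dR(v)$ gives $6\le\dB(v)\le9$, which is \eqref{th17:seventeen-degrees}.

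Nothing here is a genuine obstacle once the earlier lemmas are available. The only step that needs care is checking that both hypotheses of Lemma~\ref{th17:ten-classification} come from admissibility: the matching bound is taken at the blue centre $v$, while the no-$F_3$ condition on $\overline H$ is inherited from the whole red graph. The other point to keep straight is that the $K_7$ together with $v$ forms a blue clique, rather than a blue $F_4$.
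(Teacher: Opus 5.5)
Your proposal is correct and matches the paper's proof: you exclude $\dB(v)=10$ by applying Lemma~\ref{th17:ten-classification} to $B[\NB(v)]$ to get a blue $K_7$ that extends to a blue $K_8$, contradicting Lemma~\ref{th17:no-blue-k8}. You then read off the red bounds from $\dR(v)+\dB(v)=N-1$ and Lemma~\ref{th17:degree-ten}.
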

\begin{proof}
Lemma~\ref{th17:degree-ten} leaves only $\dB(v)=10$ to exclude.  If it
holds, let $H=B[\NB(v)]$.  By admissibility, $\nuu(H)\le3$, while
its complement contains no red $F_3$.  Lemma~\ref{th17:ten-classification}
gives a blue $K_7$ in $\NB(v)$.  Together with $v$ it is a blue $K_8$,
contradicting Lemma~\ref{th17:no-blue-k8}.  The red bounds follow from
$\dR(v)+\dB(v)=N-1$ and Lemma~\ref{th17:degree-ten}.
\end{proof}

\begin{lem}\label{th17:no-red-k6}
An admissible colouring of order $N\ge17$ contains no red $K_6$.
\end{lem}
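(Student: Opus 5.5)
We reduce to $N=17$ exactly as in Lemma~\ref{th17:no-blue-k8}: retain the red $K_6$ and any eleven further vertices. So suppose $C$ is a red clique of order six in an admissible colouring of $K_{17}$, and put $X=V(K_{17})\setminus C$, so $|X|=11$. The plan is to show that every vertex of $X$ has few blue neighbours inside $X$. This forces the red graph on $X$ to be so dense that a red $F_3$ appears.

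\emph{Step 1: each $x\in X$ has at most one red neighbour in $C$.} Suppose $xc_1,xc_2$ are red with $c_1,c_2\in C$. Then $\NR(c_1)$ contains the edge $xc_2$ and the four vertices of $C\setminus\{c_1,c_2\}$. These four vertices span a red $K_4$, which contains two further disjoint edges. This gives a red $3K_2$ in $\NR(c_1)$, contradicting Lemma~\ref{th17:fan-criterion}. Hence every $x\in X$ is blue to at least five vertices of $C$, and blue to all of $C$ except possibly one vertex $c(x)$.

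\emph{Step 2: every $x\in X$ has at most three blue neighbours in $X$.} Fix $x$, and let $C'\subseteq C$ be five of its blue neighbours in $C$. Let $Y$ be the set of blue neighbours of $x$ in $X$. Each $y\in Y$ is blue to all of $C$ except possibly one vertex, so $y$ is blue to at least four vertices of $C'$. Suppose $|Y|\ge4$, and consider the bipartite blue graph between four vertices of $Y$ and $C'$. Every subset of $Y$ of size $j\le4$ has at least $4\ge j$ neighbours in $C'$. By Hall's theorem there is a blue matching of size four. All its endpoints lie in $\NB(x)$, so this is a blue $F_4$ centred at $x$. Hence $|Y|\le3$.

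\emph{Step 3: a red $F_3$ inside $X$.} By Step~2, the blue graph $B[X]$ has maximum degree at most $3$. Therefore $R[X]$ has minimum degree at least $10-3=7$. Pick any $x\in X$ and let $W=\NR(x)\cap X$, so $|W|\ge7$. Each $w\in W$ has at most three blue neighbours in $X$, so $w$ has at least $|W|-1-3\ge3$ red neighbours within $W$. Thus $R[W]$ has at least seven vertices and minimum degree at least three. We then use the standard fact that a graph of minimum degree $\delta$ on $n$ vertices has a matching of size at least $\min\{\delta,\lfloor n/2\rfloor\}$.

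To verify this fact, take a maximum matching $M$ with $|M|<\min\{\delta,\lfloor n/2\rfloor\}$. Then at least two vertices $u,w$ are unmatched. Their neighbours all lie in $V(M)$, since an edge between unmatched vertices would enlarge $M$. The exchange argument from the proof of Lemma~\ref{th17:even-matching} shows that each edge of $M$ receives at most two edges from $\{u,w\}$. This gives $2\delta\le 2|M|$, a contradiction.

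Applying the fact to $R[W]$ gives $\nuu(R[\NR(x)])\ge3$. This is a red $F_3$ centred at $x$, contradicting Lemma~\ref{th17:fan-criterion}.

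The step I expect to need the most care is Step~2. The Hall condition must be checked correctly, in particular that four vertices of $Y$ each see at least four of the five vertices of $C'$ in blue. Once Step~2 holds, the remaining steps are routine, and Step~3 is only a minimum-degree matching bound. Notably, the argument does not need Corollary~\ref{th17:final-degrees} beyond the trivial vertex count.
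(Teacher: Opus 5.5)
Your proof is correct, and it takes a different route from the paper. Both arguments begin with your Step~1. After that, the paper simply double counts red edges leaving $C$. Step~1 gives $e_R(C,V(K_N)\setminus C)\le N-6$. Corollary~\ref{th17:final-degrees} gives every clique vertex red degree at least $N-10$, hence at least $N-15$ red neighbours outside $C$. So $6(N-15)\le N-6$, i.e.\ $5N\le 84$, which fails for $N\ge17$.

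That proof is two lines long, but it rests on the lower bound $\dR(v)\ge N-10$, equivalently $\dB(v)\le 9$. That bound in turn depends on Lemma~\ref{th17:ten-classification} and the comparatively delicate Lemma~\ref{th17:no-blue-k8}.

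You instead work from outside the clique. Because every $x\in X$ is blue to all but at most one vertex of $C$, every blue neighbour of $x$ inside $X$ can be matched into $C$ in blue. Your Hall step could even be done greedily, since each $y$ has four choices and at most three are blocked. This caps blue degrees in $X$ at three, and the minimum-degree matching bound then produces a red $F_3$.

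Your argument is longer but self-contained. It uses only Lemma~\ref{th17:fan-criterion} and the exchange argument of Lemma~\ref{th17:even-matching}, so the red $K_6$ exclusion no longer depends on the blue $K_8$ exclusion or on the degree corollary. Both proofs need exactly $N\ge17$: yours through $|W|\ge N-10\ge7$, the paper's through $5N\le84$.
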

\begin{proof}
Suppose that $C$ is a red clique of order six.  A vertex $x\notin C$
has at most one red neighbour in $C$: two such neighbours $a,b$ would
give a red $F_3$ centred at $a$, using the edge $xb$ and two disjoint
edges on the four vertices of $C\setminus\{a,b\}$.  It follows that
$\e_R(C,V(K_N)\setminus C)\le N-6$.

On the other hand, each vertex of $C$ has total red degree at least
$N-10$ by Corollary~\ref{th17:final-degrees}, and exactly five red
neighbours inside $C$.  Therefore
\[
 6(N-15)\le\e_R(C,V(K_N)\setminus C)\le N-6.
\]
This implies $5N\le84$, which is impossible for $N\ge17$.
\end{proof}

\subsection{A maximum red cut with parts of orders eight and nine}
\label{th17:cut-subsection}

For the remainder of this section, suppose that an admissible colouring
of $K_{17}$ exists.  Choose a partition $V(K_{17})=A\mathbin{\dot\cup}C$
with $|A|=8$ and $|C|=9$.  Set $G_A=R[A]$, $G_C=R[C]$, and write
\[
 r_A(a)=d_{G_A}(a),\qquad r_C(b)=d_{G_C}(b).
\]
For $a\in A$ and $b\in C$, let $x_{ab}=1$ if $ab$ is red and
$x_{ab}=0$ otherwise.  Its row and column sums are the red cross-degrees:
\[
 c(a)=\sum_{b\in C}x_{ab},\qquad
 c(b)=\sum_{a\in A}x_{ab}.
\]

\begin{prop}\label{th17:maximum-cut}
The partition can be chosen so that, for every $a\in A$ and $b\in C$,
\begin{equation}\label{eq:maxcut-swap}
 c(a)+c(b)-2x_{ab}\ge r_A(a)+r_C(b).
\end{equation}
Equivalently, the sum of the entries in row $a$ and column $b$,
excluding their common entry, is at least $r_A(a)+r_C(b)$.
For this partition,
\begin{equation}\label{th17:degree-pair}
 r_A(a)+r_C(b)+x_{ab}\le10,
 \qquad \Delta(G_A)+\Delta(G_C)\le10.
\end{equation}
\end{prop}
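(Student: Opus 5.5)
Choose, among all partitions $V(K_{17})=A\mathbin{\dot\cup}C$ with $|A|=8$ and $|C|=9$, one that maximises the number $\e_R(A,C)$ of red cross-edges. There are finitely many such partitions, so a maximiser exists. The swap inequality~\eqref{eq:maxcut-swap} will come from comparing this partition with the one obtained by exchanging a single $a\in A$ with a single $b\in C$. The swap preserves the orders eight and nine, so the new partition $A'=(A\setminus\{a\})\cup\{b\}$, $C'=(C\setminus\{b\})\cup\{a\}$ is again admissible for the maximisation.

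\textbf{Computing the change.} I will track how the red cut changes under the swap.
\begin{itemize}
\item Red edges from $a$ to $C\setminus\{b\}$ were counted before and become internal to $C'$. This loses $c(a)-x_{ab}$.
\item Red edges from $a$ to $A\setminus\{a\}$ were internal and become cross edges. This gains $r_A(a)$.
\item Symmetrically, $b$ loses $c(b)-x_{ab}$ and gains $r_C(b)$.
\item The edge $ab$ itself stays a cross edge, so it contributes nothing.
\item Edges not incident to $a$ or $b$ are unaffected.
\end{itemize}
The net change is therefore $r_A(a)+r_C(b)-c(a)-c(b)+2x_{ab}$. Maximality says this is $\le0$, which is exactly~\eqref{eq:maxcut-swap}. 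The ``equivalently'' form is a restatement: row $a$ sums to $c(a)$ and column $b$ to $c(b)$, and the common entry $x_{ab}$ is counted twice in $c(a)+c(b)$.

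\textbf{Deriving~\eqref{th17:degree-pair}.} I will add the total red degrees. We have $\dR(a)=r_A(a)+c(a)$ and $\dR(b)=r_C(b)+c(b)$, and both are at most $10$ by Corollary~\ref{th17:final-degrees}. Adding~\eqref{eq:maxcut-swap} to itself in the form $2r_A(a)+2r_C(b)\le r_A(a)+r_C(b)+c(a)+c(b)-2x_{ab}$ gives
\[
 2\bigl(r_A(a)+r_C(b)\bigr)+2x_{ab}\le \dR(a)+\dR(b)\le20 .
\]
Hence $r_A(a)+r_C(b)+x_{ab}\le10$. Since $x_{ab}\ge0$, choosing $a$ and $b$ of maximum degree in $G_A$ and $G_C$ gives $\Delta(G_A)+\Delta(G_C)\le10$.

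The main point needing care is the bookkeeping in the swap: the edge $ab$ stays a cross edge in both partitions, which produces the $-2x_{ab}$ term. Everything else is routine once Corollary~\ref{th17:final-degrees} supplies $\dR\le10$.
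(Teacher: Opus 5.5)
Your proposal is correct and follows the paper's proof essentially step for step. You take a maximum red cut among $8+9$ partitions, read off the swap inequality from the change $r_A(a)+r_C(b)-c(a)-c(b)+2x_{ab}\le0$, and then combine it with $\dR(a)+\dR(b)\le20$ to obtain both inequalities in \eqref{th17:degree-pair}.
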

\begin{proof}
Among all partitions with the prescribed part sizes, choose one with
the largest number of red cross-edges.  Swapping $a$ and $b$ preserves
both part sizes.  The edge $ab$ remains a cross-edge, while each other
edge incident with either swapped vertex changes its internal or cross
status.  The change in the number of red cross-edges is
\[
 r_A(a)+r_C(b)-c(a)-c(b)+2x_{ab}.
\]
This is nonpositive by the choice of the partition, proving
\eqref{eq:maxcut-swap}.  Since
$\dR(a)=r_A(a)+c(a)$ and $\dR(b)=r_C(b)+c(b)$, that inequality gives
\[
 2r_A(a)+2r_C(b)+2x_{ab}
 \le\dR(a)+\dR(b)\le20.
\]
Division by two proves the first inequality in
\eqref{th17:degree-pair}.  Choose vertices of maximum internal degree
in the two parts and use $x_{ab}\ge0$ to obtain the second.
\end{proof}

The existence of such a partition is sufficient for the reduction.
The cut inequalities are not asserted for every partition.  The
eight-vertex and nine-vertex parts remain distinguished throughout:
their unequal orders do not permit an interchange that identifies
two entries of the ordered seed-pair catalogue.

\begin{cor}
\label{th17:degree-boundary}
Suppose that $\Delta_A:=\Delta(G_A)$ and
$\Delta_C:=\Delta(G_C)$ satisfy $\Delta_A+\Delta_C=10$.  Put
\[
 H_A=\{a:r_A(a)=\Delta_A\},\qquad
 H_C=\{b:r_C(b)=\Delta_C\}.
\]
All cross-edges between $H_A$ and $H_C$ are blue.  Every vertex in
$H_A\cup H_C$ has total red degree ten, and
\begin{equation}\label{th17:boundary-cross-degree}
 c(a)=\Delta_C\quad(a\in H_A),\qquad
 c(b)=\Delta_A\quad(b\in H_C).
\end{equation}
\end{cor}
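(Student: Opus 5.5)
The plan is to specialise the chain of inequalities in the proof of Proposition~\ref{th17:maximum-cut} to a pair $a\in H_A$, $b\in H_C$, and observe that every inequality in it must hold with equality. Recall that \eqref{eq:maxcut-swap} together with $\dR(a)=r_A(a)+c(a)$ and $\dR(b)=r_C(b)+c(b)$ gave
\[
 2r_A(a)+2r_C(b)+2x_{ab}\le\dR(a)+\dR(b)\le20,
\]
where the last step uses the bound $\dR\le10$ from Corollary~\ref{th17:final-degrees}. For $a\in H_A$ and $b\in H_C$ we have $r_A(a)+r_C(b)=\Delta_A+\Delta_C=10$. Substituting gives $20+2x_{ab}\le20$, so $x_{ab}=0$. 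Hence every cross-edge between $H_A$ and $H_C$ is blue.

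With $x_{ab}=0$ the displayed chain reads $20\le\dR(a)+\dR(b)\le20$. Combined with the individual bounds $\dR(a),\dR(b)\le10$, this forces $\dR(a)=\dR(b)=10$. Since $H_A$ and $H_C$ are both nonempty (each contains a vertex of maximum internal degree), every $a\in H_A$ can be paired with some $b\in H_C$, and conversely. Therefore every vertex of $H_A\cup H_C$ has total red degree ten.

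The cross-degrees then follow directly: $c(a)=\dR(a)-r_A(a)=10-\Delta_A=\Delta_C$ for $a\in H_A$, and symmetrically $c(b)=10-\Delta_C=\Delta_A$ for $b\in H_C$. This is \eqref{th17:boundary-cross-degree}.

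I expect no real obstacle. The only points needing care are that $H_A$ and $H_C$ are nonempty, which is what makes the pairing argument valid, and that the degree bound $\dR\le10$ is applied to each vertex individually in order to split the equality $\dR(a)+\dR(b)=20$ into two separate equalities.
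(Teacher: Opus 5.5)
Your proof is correct and follows the paper's argument essentially verbatim. The paper also obtains $x_{ab}=0$ from the first inequality in \eqref{th17:degree-pair}, which is just your chain divided by two. It then uses $20\le\dR(a)+\dR(b)\le20$ with the individual bound ten, the nonemptiness of $H_A$ and $H_C$, and subtraction of the internal degree, exactly as you do.
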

\begin{proof}
For $a\in H_A$ and $b\in H_C$, \eqref{th17:degree-pair} forces
$x_{ab}=0$.  The preceding proof then gives
$20\le\dR(a)+\dR(b)\le20$.  Both degrees are at most ten, so each
equals ten.  Both sets of maximum-degree vertices are nonempty;
hence this conclusion holds for every vertex in their union.
Subtracting the internal degree proves
\eqref{th17:boundary-cross-degree}.
\end{proof}

\begin{prop}
\label{th17:cross-intervals}
For a vertex $v$ in a part whose red graph is $G$ of order
$m\in\{8,9\}$,
\begin{equation}\label{th17:vertex-cross-bounds}
 \max\{0,7-d_G(v)\}\le c(v)
 \le\min\{17-m,10-d_G(v)\}.
\end{equation}
Define
\begin{equation}\label{th17:block-intervals}
 L(G)=7m-2\e(G),\qquad
 U(G)=\sum_{v\in V(G)}\min\{17-m,10-d_G(v)\}.
\end{equation}
Then the common number of red cross-edges satisfies
\begin{equation}\label{th17:cross-sum-test}
 \max\{0,L(G_A),L(G_C)\}
 \le \e_R(A,C)
 \le\min\{72,U(G_A),U(G_C)\}.
\end{equation}
\end{prop}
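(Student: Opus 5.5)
The plan is to derive the per-vertex bounds~\eqref{th17:vertex-cross-bounds} directly from the degree interval of Corollary~\ref{th17:final-degrees}. Then summing over each part gives~\eqref{th17:cross-sum-test}. Fix a vertex $v$ in a part $P\in\{A,C\}$ with $|P|=m$ and $G=R[P]$. Its total red degree splits as $\dR(v)=d_G(v)+c(v)$, because every red neighbour of $v$ lies either inside $P$ or in the opposite part.

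For the per-vertex bounds, I would argue as follows.
\begin{itemize}
\item Equation~\eqref{th17:seventeen-degrees} gives $7\le\dR(v)\le10$. Hence $7-d_G(v)\le c(v)\le 10-d_G(v)$.
\item Trivially $c(v)\ge0$.
\item Also $c(v)$ is at most the size of the opposite part, which is $17-m$.
\end{itemize}
Combining these yields~\eqref{th17:vertex-cross-bounds}. I do not need the maximum-cut inequalities of Proposition~\ref{th17:maximum-cut} here, so the statement holds for every $8+9$ partition.

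Next, the number of red cross-edges is the same when counted from either side:
\[
 \e_R(A,C)=\sum_{a\in A}c(a)=\sum_{b\in C}c(b).
\]
For the lower bound, sum $c(v)\ge 7-d_G(v)$ over $v\in V(G)$ and use the handshake identity $\sum_v d_G(v)=2\e(G)$. This gives $\e_R(A,C)\ge 7m-2\e(G)=L(G)$ for both $G=G_A$ and $G=G_C$. Together with $\e_R(A,C)\ge0$, this proves the left inequality of~\eqref{th17:cross-sum-test}.

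For the upper bound, sum the per-vertex upper bounds to obtain $\e_R(A,C)\le U(G)$ for each part. In addition, there are only $|A||C|=72$ cross pairs, so $\e_R(A,C)\le72$. This proves the right inequality.

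There is no genuine obstacle in this argument. The only point needing care is bookkeeping: the sum of the individual lower bounds $\max\{0,7-d_G(v)\}$ is stronger than $L(G)$, and the statement only claims the weaker linear form. So I would sum the inequality $c(v)\ge7-d_G(v)$ itself rather than its maximum with zero, and then take the maximum with $0$ at the level of the total.
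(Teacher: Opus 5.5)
Your proposal is correct and follows the paper's proof. Subtract the internal degree from $7\le d_R(v)\le 10$, cap $c(v)$ by the opposite part's size $17-m$, then sum the linear lower bound $c(v)\ge 7-d_G(v)$ and the upper bounds over each part, noting that both sums count the same set of at most $72$ cross-edges; your remark that the maximum-cut condition is not needed also agrees with the paper.
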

\begin{proof}
Subtract the internal red degree from $7\le\dR(v)\le10$, and note
that at most $17-m$ cross-neighbours are available.  This proves
\eqref{th17:vertex-cross-bounds}.  Summing the weaker lower bounds
$c(v)\ge7-d_G(v)$ gives $L(G)$, and summing the upper bounds gives
$U(G)$.  Each side counts the same cross-edge set, whose size is
between zero and $8\cdot9=72$.
\end{proof}

The summed lower bound $L(G)$ remains valid when an internal degree
is eight, although the corresponding summand $7-d_G(v)$ is negative.
The individual constraints in~\eqref{th17:vertex-cross-bounds} retain
the stronger truncation at zero.

\subsection{Residual matchings and exact one-vertex extensions}
\label{th17:extensions-subsection}

For a red seed $G$ and $v\in V(G)$, define
\[
 t_G(v)=\nuu(G[N_G(v)]),\qquad
 u_G(v)=\nuu(\overline G[N_{\overline G}(v)]).
\]
For $a\in A$, let $S_a=\{c\in C:x_{ac}=1\}$; define the corresponding
red cross-neighbourhood for a vertex of $C$ in the same way.

\begin{lem}\label{th17:residual}
Every admissible gluing satisfies, for every $a\in A$,
\begin{equation}\label{th17:residual-inequalities}
\begin{split}
 t_{G_A}(a)+\nuu(G_C[S_a])&\le2,\\
 u_{G_A}(a)+\nuu(\overline{G_C}[C\setminus S_a])&\le3.
\end{split}
\end{equation}
The analogous inequalities hold with the two parts exchanged.
\end{lem}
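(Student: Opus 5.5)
The plan is to apply Lemma~\ref{th17:fan-criterion} at a vertex $a\in A$, once for each colour. The red neighbourhood of $a$ is the disjoint union of its internal part $N_{G_A}(a)$ and its cross part $S_a$. Likewise, the blue neighbourhood of $a$ is the disjoint union of $N_{\overline{G_A}}(a)$ and $C\setminus S_a$. The key observation is that the matching number of an induced subgraph on a disjoint union of vertex sets is at least the sum of the matching numbers on the two pieces. Indeed, a maximum matching inside each piece uses only vertices of that piece, and the two matchings are vertex-disjoint, so their union is a matching. Cross edges between the pieces are simply ignored, which is why only an inequality (not equality) is claimed.

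For the red inequality, take a maximum matching of $G_A[N_{G_A}(a)]$ of size $t_{G_A}(a)$. Take also a maximum matching of $G_C[S_a]$ of size $\nuu(G_C[S_a])$. Every edge of $G_A$ and $G_C$ is red, since these are the induced red graphs on the parts. All endpoints lie in $\NR(a)$: the first because they are red neighbours of $a$ inside $A$, the second because $S_a$ consists of the red cross-neighbours of $a$. Hence $\nuu(R[\NR(a)])\ge t_{G_A}(a)+\nuu(G_C[S_a])$, and admissibility via \eqref{th17:local-matching} bounds the left side by $2$.

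The blue inequality is identical with complements. Blue edges inside $A$ are the edges of $\overline{G_A}$, with complements taken on the seed's own vertex set, as the Notation paragraph stipulates. The blue neighbours of $a$ in $A$ are $N_{\overline{G_A}}(a)$, and those in $C$ are exactly $C\setminus S_a$ since every cross-edge is red or blue. Blue edges inside $C$ are the edges of $\overline{G_C}$. So the union of a maximum matching in $\overline{G_A}[N_{\overline{G_A}}(a)]$ and one in $\overline{G_C}[C\setminus S_a]$ is a blue matching in $B[\NB(a)]$. By \eqref{th17:local-matching} it has size at most $3$.

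The exchanged statement for $b\in C$ follows by the same argument with the roles of $(A,G_A)$ and $(C,G_C)$ swapped, using the red cross-neighbourhood of $b$ in $A$. There is no real obstacle. The only point needing care is the bookkeeping that complements are taken within each seed, so that $\overline{G_A}$ and $\overline{G_C}$ are exactly the internal blue graphs and do not pick up cross pairs.
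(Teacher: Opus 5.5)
Your proof is correct and takes the same route as the paper. The internal and cross parts of each monochromatic neighbourhood of $a$ are disjoint, so maximum matchings in the two parts combine into a matching in $R[\NR(a)]$ or $B[\NB(a)]$, and Lemma~\ref{th17:fan-criterion} bounds its size by two or three respectively.
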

\begin{proof}
The matching in the internal red neighbourhood of $a$ and a matching
in $G_C[S_a]$ have disjoint endpoint sets.  Their union lies in the
full red neighbourhood of $a$, so its size is at most two by
Lemma~\ref{th17:fan-criterion}.  Apply the same argument in blue,
with matching bound three, to obtain the second inequality.
\end{proof}

For $m\in\{8,9\}$ let $\mathcal S_m$ contain one representative of
each isomorphism class of red graphs $G$ of order $m$ such that
\begin{equation}\label{th17:seed-family}
 F_3\not\subseteq G,\quad F_4\not\subseteq\overline G,
 \quad K_6\not\subseteq G,\quad K_8\not\subseteq\overline G.
\end{equation}
These are hereditary restrictions: they are preserved by deleting
vertices.  The upper bounds on red and blue degree are also
hereditary and can be imposed during generation; at seed orders
eight and nine they impose no additional restriction.  Isomorphisms
preserve red edges and hence blue edges; exchanging the colours is
not an allowed equivalence.

For $G\in\mathcal S_m$ define $\mathcal E(G)$ to be the subsets
$S\subseteq V(G)$ for which appending a new vertex $z$, with red
neighbourhood precisely $S$, preserves all four exclusions in
\eqref{th17:seed-family}.  The extended graph has order $m+1$, which
is nine or ten, so its red and blue degrees are automatically at most
ten and nine.  Its vertices need not have the minimum degrees of a
completed seventeen-vertex colouring.

Define a conflict graph $D_R(G)$ on $V(G)$ by declaring $ab$ to be an
edge when $ab\in E(G)$ and at least one of
\[
 \nuu(G[N_G(a)\setminus\{b\}])\ge2,
 \qquad
 \nuu(G[N_G(b)\setminus\{a\}])\ge2
\]
holds.  Define $D_B(G)$ similarly, replacing $G$ by $\overline G$
and the threshold two by three.

\begin{lem}\label{th17:extension-test}
A subset $S\subseteq V(G)$ belongs to $\mathcal E(G)$ if and only if
all the following conditions hold, where $T=V(G)\setminus S$:
\begin{equation}\label{th17:extension-conditions}
\begin{aligned}
 \nuu(G[S])&\le2,& \nuu(\overline G[T])&\le3,\\
 S&\text{ is independent in }D_R(G),&
 T&\text{ is independent in }D_B(G),\\
 \omega(G[S])&\le4,& \omega(\overline G[T])&\le6.
\end{aligned}
\end{equation}
\end{lem}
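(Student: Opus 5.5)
The plan is to split each of the four exclusions in \eqref{th17:seed-family} for the extended graph $G'=G+z$, with $N_{G'}(z)=S$, into obstructions that avoid $z$ and obstructions that use $z$. Obstructions avoiding $z$ lie inside $G$ or $\overline G$ and are already excluded because $G\in\mathcal S_m$. So $S\in\mathcal E(G)$ exactly when no red $F_3$, blue $F_4$, red $K_6$ or blue $K_8$ of $G'$ contains $z$. I will show that each pair of conditions in \eqref{th17:extension-conditions} is equivalent to one of these ``through $z$'' exclusions.

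\emph{Cliques.} A red $K_6$ in $G'$ containing $z$ is $z$ together with a red $K_5$ inside $S$. Hence its absence is equivalent to $\omega(G[S])\le4$. In the same way, a blue $K_8$ through $z$ is $z$ together with a blue $K_7$ in $T$, since the blue neighbourhood of $z$ is exactly $T$. This gives $\omega(\overline G[T])\le6$.

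\emph{Fans centred at $z$.} By Lemma~\ref{th17:fan-criterion}, a red $F_3$ centred at $z$ exists if and only if $\nuu(G[S])\ge3$. A blue $F_4$ centred at $z$ exists if and only if $\nuu(\overline G[T])\ge4$. These give the two matching conditions.

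\emph{Fans centred at some $a\in V(G)$ and using $z$.} This is the step needing care. Suppose $z$ lies in a red $F_3$ centred at $a$. Then $a\in S$, and $z$ is an endpoint of a blade $zb$ with $b\in S\cap N_G(a)$, so $ab\in E(G)$. The other two blades form a matching of size two in $G[N_G(a)\setminus\{b\}]$. Hence $a,b\in S$ are adjacent in $D_R(G)$. Conversely, suppose $a,b\in S$ are adjacent in $D_R(G)$, say with $\nuu(G[N_G(a)\setminus\{b\}])\ge2$. Then the blade $zb$ together with that matching gives three disjoint edges in $N_{G'}(a)$. Note that this matching avoids $b$ by construction and avoids $z$ trivially. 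So there is a red $F_3$ centred at $a$. The other alternative in the definition of $D_R(G)$ gives a fan centred at $b$ in the same way. The blue case is identical: use $T$, $\overline G$, blue blades $zb$ with $b\in T$, and the threshold three, so that together with $zb$ the blades reach four. The one subtlety is that $z$ could lie in a fan centred at $a$ without being a blade endpoint. This cannot happen, because every non-centre vertex of a fan is a blade endpoint. Consequently every fan through $z$ is either centred at $z$ or has $z$ on a blade, and this case split is exhaustive.

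Combining the three cases gives the equivalence. The main obstacle is only the careful bookkeeping in the third case. In particular, the definition of $D_R(G)$ requires $ab\in E(G)$, and this matches the fact that $b$ must be a red neighbour of the centre $a$ to carry the blade $zb$. Independence of $S$ in $D_R(G)$ then corresponds exactly to the absence of such pairs $a,b\in S$.
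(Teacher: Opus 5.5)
Your proposal is correct and follows essentially the same route as the paper. You separate the obstructions avoiding $z$, which are excluded because $G\in\mathcal S_m$, from those through $z$, and then match the fans centred at $z$, the fans centred at an old vertex with $z$ on a blade $zb$, and the cliques through $z$ to the three lines of conditions; your checks that the converse matching avoids $b$ and $z$, and that every non-centre vertex of a fan is a blade endpoint, make explicit details the paper leaves implicit.
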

\begin{proof}
The first line is precisely the absence of the forbidden fans centred
at the new vertex $z$.  Consider a new red $F_3$ centred at an old
vertex $a$.  Because $G$ itself has no such fan, $z$ must occur in
it.  It is not the centre, so it is an endpoint of a matching edge
$zb$ in the red neighbourhood of $a$.  Consequently $a,b\in S$,
$ab\in E(G)$, and the other two matching edges lie inside
$N_G(a)\setminus\{b\}$.  Conversely, these conditions produce a red
$F_3$ centred at $a$.  Taking either orientation of $ab$ gives exactly
the independence condition in $D_R(G)$.  The blue argument is identical,
with three old matching edges and both endpoints in $T$.

Any newly created red $K_6$ must contain $z$, and its other five
vertices form a red $K_5$ in $S$.  The blue counterpart is a blue
$K_7$ in $T$.  The last line therefore excludes exactly the new
forbidden cliques.  These observations prove both directions.
\end{proof}

The conflict argument also gives explicit binary constraints on a
gluing matrix.  If $ab\in E(D_R(G_A))$, then
\begin{equation}\label{th17:red-binary-cut}
 x_{ac}+x_{bc}\le1\qquad(c\in C).
\end{equation}
If $ab\in E(D_B(G_A))$, then
\begin{equation}\label{th17:blue-binary-cut}
 x_{ac}+x_{bc}\ge1\qquad(c\in C).
\end{equation}
The corresponding constraints hold for columns as well.  Indeed,
fixing a vertex in the opposite part produces the single-vertex
extension to which Lemma~\ref{th17:extension-test} applies.

\subsection{Profiles and sound rejection of seed pairs}
\label{th17:profiles-subsection}

Use the following set of centre types, each a triple of nonnegative
integers:
\begin{equation}\label{th17:type-universe}
 \mathcal T=\left\{(r,t,u):
 \begin{array}{l}
 0\le r\le8,\quad 0\le t\le\min\{2,\lfloor r/2\rfloor\},\\
 0\le u\le\min\{3,\lfloor(8-r)/2\rfloor\}
 \end{array}\right\}.
\end{equation}
It has 49 elements.  The type of a vertex $v$ in a seed $G$ is
$(d_G(v),t_G(v),u_G(v))$.  Every type of either seed order belongs to
$\mathcal T$.  For order eight, the actual blue degree is $7-r$;
using the larger bound $8-r$ puts the possible types for both orders
in the same set.

Define the types occurring in $G$ and the types allowed in the opposite seed by
\begin{equation}\label{th17:profile-types}
\begin{split}
 T(G)={}&\{(d_G(v),t_G(v),u_G(v)):v\in V(G)\},\\
 P(G)={}&\{(r,t,u)\in\mathcal T:\text{ some }S\in\mathcal E(G)
              \text{ satisfies}\\
        &\qquad 7\le r+|S|\le10,\quad
               t+\nuu(G[S])\le2,\\
        &\qquad u+\nuu(\overline G[V(G)\setminus S])\le3\}.
\end{split}
\end{equation}
Each type is tested separately: it belongs to $P(G)$ if at least one
suitable subset $S$ exists. Subsets chosen for different types need not
occur together in a single gluing.

\begin{lem}\label{th17:type-filter}
An admissible gluing of $G_A$ and $G_C$ necessarily satisfies
\begin{equation}\label{th17:type-inclusions}
 T(G_A)\subseteq P(G_C),\qquad T(G_C)\subseteq P(G_A).
\end{equation}
\end{lem}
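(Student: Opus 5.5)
By symmetry of the two statements it suffices to prove $T(G_A)\subseteq P(G_C)$; the other inclusion follows by exchanging the roles of the parts. The analogous versions of all earlier lemmas hold for both parts. Fix $a\in A$ and let $(r,t,u)=(r_A(a),t_{G_A}(a),u_{G_A}(a))$ be its type. We must exhibit a single witness $S\in\mathcal E(G_C)$ satisfying the three numerical conditions in \eqref{th17:profile-types}. The natural candidate is the red cross-neighbourhood $S=S_a=\{c\in C:x_{ac}=1\}$.

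First I check that $S_a\in\mathcal E(G_C)$. The subgraph of the colouring induced on $C\cup\{a\}$ is the seed $G_C$ with one appended vertex whose red neighbourhood is exactly $S_a$. This is an induced subcolouring of an admissible colouring of $K_{17}$. Hence it contains no red $F_3$ and no blue $F_4$, by Lemma~\ref{th17:fan-criterion} or simply because subgraphs of it are subgraphs of the whole colouring. It also contains no red $K_6$ and no blue $K_8$, by Lemma~\ref{th17:no-red-k6} and Lemma~\ref{th17:no-blue-k8}. These four exclusions are exactly \eqref{th17:seed-family}, so $S_a\in\mathcal E(G_C)$ by definition. Lemma~\ref{th17:extension-test} is not needed here.

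Next I verify the numerical conditions. Since $\dR(a)=r_A(a)+c(a)=r+|S_a|$, Corollary~\ref{cor:degree17} gives $7\le r+|S_a|\le10$. The two matching conditions $t+\nuu(G_C[S_a])\le2$ and $u+\nuu(\overline{G_C}[C\setminus S_a])\le3$ are exactly the inequalities \eqref{th17:residual-inequalities} of Lemma~\ref{th17:residual}.

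Finally, I confirm that the type lies in $\mathcal T$. The remark before the lemma says every type of either seed order lies in $\mathcal T$. Indeed, $r\le7$, and $t\le\lfloor r/2\rfloor$ because a matching in a set of $r$ vertices has at most $\lfloor r/2\rfloor$ edges; $t\le2$ because $G_A$ has no red $F_3$. Similarly, $u\le\lfloor(7-r)/2\rfloor\le\lfloor(8-r)/2\rfloor$, and $u\le3$ because $\overline{G_A}$ has no blue $F_4$. Therefore $(r,t,u)\in P(G_C)$. There is no real obstacle here: the only point needing care is that membership in $\mathcal E(G_C)$ requires the clique exclusions as well, and these are supplied by the earlier lemmas for order $17$.
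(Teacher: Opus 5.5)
Your proof is correct and matches the paper's argument. You take the actual cross-neighbourhood $S_a$ as the witness, get $S_a\in\mathcal E(G_C)$ because the induced colouring on $C\cup\{a\}$ has no forbidden fan or clique, and derive the three numerical conditions from Corollary~\ref{cor:degree17} and Lemma~\ref{th17:residual}; your extra check that the type lies in $\mathcal T$ only spells out the remark the paper makes just before the lemma.
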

\begin{proof}
Fix $a\in A$ and its actual red cross-neighbourhood $S_a\subseteq C$.
The induced colouring on $C\cup\{a\}$ has no forbidden fan or clique,
so $S_a\in\mathcal E(G_C)$.  The total-degree bounds and
Lemma~\ref{th17:residual} give the three additional conditions in
\eqref{th17:profile-types} for the type of $a$.  This proves the
first inclusion; the second follows by exchanging the roles of the
two parts, without changing their assigned orders.
\end{proof}

For a seed $G$, write $H(G)=\{v:d_G(v)=\Delta(G)\}$ and define
\begin{equation}\label{th17:boundary-profile}
\begin{split}
 \rho(G)&=\max_{v\in H(G)} t_G(v),\\
 \eta_k(G)&=1\quad\Longleftrightarrow\quad
 \text{there is }S\in\mathcal E(G)\text{ such that}\\
 &\qquad |S|=\Delta(G),\quad S\cap H(G)=\varnothing,\\
 &\qquad \nuu(G[S])\le k,\quad k\in\{0,1,2\}.
\end{split}
\end{equation}
Otherwise $\eta_k(G)=0$.  In particular $0\le\rho(G)\le2$.

\begin{lem}\label{th17:boundary-filter}
If a maximum-cut admissible gluing has
$\Delta(G_A)+\Delta(G_C)=10$, then
\begin{equation}\label{th17:boundary-flags}
 \eta_{2-\rho(G_A)}(G_C)=1,
 \qquad \eta_{2-\rho(G_C)}(G_A)=1.
\end{equation}
\end{lem}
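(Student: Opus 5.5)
By symmetry between the two parts (the roles of $A$ and $C$ are exchanged without changing the orders), it suffices to prove the first flag, $\eta_{2-\rho(G_A)}(G_C)=1$. The plan is to exhibit an explicit witness set: take a vertex $a\in H(G_A)$ realising the maximum, so that $r_A(a)=\Delta_A$ and $t_{G_A}(a)=\rho(G_A)$. Then show that its actual red cross-neighbourhood $S=S_a\subseteq C$ satisfies every condition in the definition of $\eta_k(G_C)$ with $k=2-\rho(G_A)$.

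The conditions are checked one at a time.

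\emph{Membership in $\mathcal E(G_C)$.} The colouring induced on $C\cup\{a\}$ is a subcolouring of an admissible colouring of $K_{17}$. It therefore has no red $F_3$ or blue $F_4$. It also has no red $K_6$ or blue $K_8$, by Lemmas~\ref{th17:no-red-k6} and~\ref{th17:no-blue-k8}. Hence $S_a\in\mathcal E(G_C)$, exactly as in the proof of Lemma~\ref{th17:type-filter}.

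\emph{Size.} Corollary~\ref{th17:degree-boundary} applies because $\Delta_A+\Delta_C=10$. It gives $|S_a|=c(a)=\Delta_C=\Delta(G_C)$.

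\emph{Avoidance of $H(G_C)$.} The same corollary says every cross-edge between $H_A$ and $H_C=H(G_C)$ is blue. Since $a\in H_A$, no vertex of $H(G_C)$ lies in $S_a$, so $S_a\cap H(G_C)=\varnothing$.

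\emph{Matching bound.} The first inequality of Lemma~\ref{th17:residual} gives $\nuu(G_C[S_a])\le 2-t_{G_A}(a)=2-\rho(G_A)=k$. Because $0\le\rho(G_A)\le2$, the index $k$ lies in $\{0,1,2\}$, so $\eta_k$ is defined.

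These four facts together give $\eta_{2-\rho(G_A)}(G_C)=1$. Exchanging the parts gives the second flag.

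The only delicate point is the choice of $a$. It must lie in $H(G_A)$, so that Corollary~\ref{th17:degree-boundary} applies to it. It must also attain the maximum $\rho(G_A)$, so that the residual bound yields exactly $k=2-\rho(G_A)$. Any other vertex of $H(G_A)$ would only give the weaker flag $\eta_{2-t}$ with $t\le\rho(G_A)$, which implies nothing about $\eta_{2-\rho}$, since $\eta_k$ is monotone increasing in $k$. Once $a$ is chosen as a maximiser within $H(G_A)$, every step is a direct citation of an earlier result.
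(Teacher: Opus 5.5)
Your proposal is correct and follows essentially the same route as the paper. You choose $a\in H(G_A)$ with $t_{G_A}(a)=\rho(G_A)$ and verify that $S_a$ witnesses the flag, using Corollary~\ref{th17:degree-boundary} for size and avoidance, the induced extension for membership in $\mathcal E(G_C)$, and Lemma~\ref{th17:residual} for the matching bound, then obtain the second flag by exchanging the parts.
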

\begin{proof}
Choose $a\in H(G_A)$ with $t_{G_A}(a)=\rho(G_A)$.
Corollary~\ref{th17:degree-boundary} says that its red
cross-neighbourhood $S_a$ has size $\Delta(G_C)$ and avoids $H(G_C)$.
It belongs to $\mathcal E(G_C)$ by the induced extension, and
Lemma~\ref{th17:residual} gives
$\nuu(G_C[S_a])\le2-\rho(G_A)$.  This is exactly the first flag
condition.  The second is proved in the other direction.
\end{proof}

The eight-component profile is
\begin{equation}\label{th17:profile-tuple}
\begin{gathered}
 \bigl(\Delta(G),\e(G),L(G),U(G),T(G),P(G),\rho(G),
             \eta(G)\bigr),\\
 \eta(G)=\sum_{k=0}^{2}2^k\eta_k(G).
\end{gathered}
\end{equation}
The pair tests consist of
the maximum-degree test in \eqref{th17:degree-pair}, nonempty overlap
in \eqref{th17:cross-sum-test}, the two inclusions in
\eqref{th17:type-inclusions}, and, only at degree-sum ten, the two
flags in \eqref{th17:boundary-flags}.  The preceding results prove
that all these filters are necessary; none is used as a sufficient
test for an admissible gluing.

\subsection{Coverage of the finite reduction}
\label{th17:coverage-subsection}

\begin{prop}\label{th17:coverage}
If an admissible colouring of $K_{17}$ exists, there are representatives
$G_A\in\mathcal S_8$ and $G_C\in\mathcal S_9$ whose ordered pair passes
all the profile tests above, and an $8\times9$ binary matrix of
cross-edges which completes this pair to an admissible colouring.
The matrix satisfies all individual degree bounds
\eqref{th17:vertex-cross-bounds}, all swap inequalities
\eqref{eq:maxcut-swap}, all residual matching conditions
\eqref{th17:residual-inequalities}, and all applicable binary conflict
constraints \eqref{th17:red-binary-cut}--\eqref{th17:blue-binary-cut}.
At degree-sum ten it also satisfies
Corollary~\ref{th17:degree-boundary}.
\end{prop}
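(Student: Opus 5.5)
Starting from an admissible colouring of $K_{17}$, I first fix the partition $V(K_{17})=A\mathbin{\dot\cup}C$ with $|A|=8$, $|C|=9$ from Proposition~\ref{th17:maximum-cut}, chosen among all such partitions to maximise the number of red cross-edges. The induced red graphs $R[A]$ and $R[C]$ inherit all four exclusions in \eqref{th17:seed-family}. There is no red $F_3$ or blue $F_4$ by admissibility and heredity. There is no red $K_6$ by Lemma~\ref{th17:no-red-k6}, and no blue $K_8$ by Lemma~\ref{th17:no-blue-k8}. So $R[A]$ is isomorphic to a unique representative $G_A\in\mathcal S_8$, and $R[C]$ to a unique $G_C\in\mathcal S_9$. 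I relabel $A$ and $C$ along fixed isomorphisms. The cross-edge colours then become an $8\times9$ binary matrix $(x_{ab})$ that completes the pair $(G_A,G_C)$ to a colouring isomorphic to the original, and hence admissible.

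The key point is that every property used is invariant under the relabelling. The maximum-cut property is a property of the partition in the coloured $K_{17}$, not of the labels. So the swap inequalities \eqref{eq:maxcut-swap}, the degree-pair bounds \eqref{th17:degree-pair}, and, when $\Delta(G_A)+\Delta(G_C)=10$, Corollary~\ref{th17:degree-boundary} transfer verbatim to the matrix. The same holds for the remaining constraints. The individual bounds \eqref{th17:vertex-cross-bounds} come from Proposition~\ref{th17:cross-intervals} via Corollary~\ref{th17:final-degrees}. The residual conditions \eqref{th17:residual-inequalities} come from Lemma~\ref{th17:residual}. The binary constraints \eqref{th17:red-binary-cut}--\eqref{th17:blue-binary-cut} come from Lemma~\ref{th17:extension-test}, applied to the one-vertex extension of one seed by a single vertex of the opposite part; that extension is an induced subcolouring and so satisfies the exclusions.

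Next I check that the ordered pair passes the profile tests. The maximum-degree test is \eqref{th17:degree-pair}. The interval overlap follows from \eqref{th17:cross-sum-test}, since the actual value $\e_R(A,C)$ lies in both intervals. The type inclusions are Lemma~\ref{th17:type-filter}. At degree-sum ten, the flags hold by Lemma~\ref{th17:boundary-filter}. Each of these results was stated for an admissible, and where relevant maximum-cut, gluing, which is exactly what we have.

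The main subtlety is the asymmetry of the reduction. The roles of the $8$-vertex and $9$-vertex parts must not be confused, and the profiles must be isomorphism invariants. I will note that $T$, $P$, $\rho$, $\eta$, $\Delta$, $\e$, $L$ and $U$ are defined purely from the isomorphism type of a seed; for $P$ and $\eta$, this is because $\mathcal E(G)$ is mapped onto itself by isomorphisms. Hence evaluating the profiles on the chosen representatives gives the same test outcomes as evaluating them on $R[A]$ and $R[C]$.
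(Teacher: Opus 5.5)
Your proposal is correct and follows the paper's proof. Like the paper, you choose a maximum red $8+9$ cut, place both induced red graphs in the seed families by heredity and Lemmas~\ref{th17:no-blue-k8} and~\ref{th17:no-red-k6}, and relabel each part to its representative while transporting the cross-edges; every constraint and profile test then follows from the preceding lemmas, which are unaffected by the relabelling, and you simply spell out which result supplies each one.
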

\begin{proof}
Choose an $8+9$ partition maximizing the red cut, as in
Proposition~\ref{th17:maximum-cut}.  Its two induced red graphs
satisfy \eqref{th17:seed-family}: the fan exclusions are hereditary,
and Lemmas~\ref{th17:no-blue-k8} and~\ref{th17:no-red-k6} exclude the
two cliques in the whole colouring.  Apply separate isomorphisms to
replace these graphs by their representatives in $\mathcal S_8$ and
$\mathcal S_9$, transporting all cross-edges under the same maps.
This gives an $8\times9$ binary matrix with the stated properties.
All profile conditions follow from the preceding lemmas and are
unchanged under either seed relabelling.
\end{proof}

The next section describes the finite enumeration to which this
coverage result applies.

\section{Canonical seed generation and the gluing catalogue}
\label{comp17:catalogue}

The computation represents a colouring by its red graph; the blue graph
is its complement.  The two colours are distinguished throughout.  For
$s\in\{8,9\}$, let $\mathcal S_s$ be a set containing one representative
of each isomorphism class of red graphs $H$ of order $s$ satisfying
\begin{equation}\label{comp17:seed-family}
 F_3\nsubseteq H,\quad F_4\nsubseteq\overline H,\quad
 K_6\nsubseteq H,\quad K_8\nsubseteq\overline H,\quad
 \Delta(H)\le10,\quad\Delta(\overline H)\le9.
\end{equation}
The degree conditions in~\eqref{comp17:seed-family} impose no further
restriction at orders eight and nine, but the generator applies the
same conditions at every order.
Every induced eight- or nine-vertex subgraph of an admissible colouring
of $K_{17}$ belongs to the corresponding family by the preceding structural
results.  The lower bound $\dR(v)\ge7$ is a condition on a completed
seventeen-vertex colouring, and is not imposed on a smaller seed.

\subsection{Canonical construction paths}

The seed generator starts with the unique graph of order zero.  Suppose
that the representatives at order $s$ have been generated.  For each parent
$H$ and each orbit of the action of $\operatorname{Aut}(H)$ on
$2^{V(H)}$, choose one subset $M$.  Add a new vertex with red neighbourhood
$M$, and reject the resulting graph if it fails a condition
in~\eqref{comp17:seed-family}.  The surviving augmentation is accepted
precisely when its new vertex belongs to the child's canonical deletion
orbit.  This orbit is defined by the vertex receiving the final canonical
label and its orbit under the child's automorphism group.

Canonical labels and automorphism generators are computed with BLISS
through the igraph interface~\citep{BLISS,igraph}. Subset orbits are
obtained by traversing generators of the automorphism group. Thus
isomorphic extensions are rejected during generation.

The coverage argument is the usual canonical construction-path induction.
For a graph $G$ in the hereditary family, delete a vertex in its canonical
deletion orbit.  The smaller graph is in the same hereditary family and is
represented at the previous level.  Transporting the deleted vertex's
neighbourhood to this representative yields one of its subset orbits, so
the corresponding augmentation is considered and accepted.  Conversely,
an isomorphism between two accepted children takes their distinguished
new vertices into the same canonical deletion orbit.  Composing with a
child automorphism makes the new vertices correspond; restriction then
identifies the parent and its neighbourhood orbit.  Induction therefore
gives exactly one accepted representative per isomorphism class, provided
the canonical-labelling and orbit routines are implemented correctly.

Table~\ref{comp17:seed-counts} gives the enumeration counts. The same
hereditary conditions are used at every order, so both seed libraries
are covered by the canonical construction-path argument.

\begin{table}[htbp]
\centering
\caption{Completed canonical seed levels.  The last column counts
parent-neighbourhood orbit representatives examined at that augmentation
level, before all child tests.}
\label{comp17:seed-counts}
\begin{tabular}{rrr}
\toprule
Order & Accepted classes & Augmentations examined\\
\midrule
0 & 1 & ---\\
1 & 1 & 1\\
2 & 2 & 2\\
3 & 4 & 6\\
4 & 11 & 20\\
5 & 34 & 90\\
6 & 155 & 544\\
7 & 941 & 5,089\\
8 & 8,812 & 72,672\\
9 & 115,174 & 1,579,761\\
\bottomrule
\end{tabular}
\end{table}

\subsection{Finite seed profiles and necessary pair tests}

We use the seed notation $G_A,G_C$ and the profile definitions from
Section~\ref{th17:profiles-subsection}.  The unknown cross-edges form a
binary $8\times9$ matrix $X$, with $x_{ij}=1$ for a red edge.  All pair
filters are evaluated before assigning any entry of $X$.

For a seed $H$ of order $s$ and $S\subseteq V(H)$, put
$M_H(S)=\nuu(H[S])$. All of these values are computed from
$M_H(\varnothing)=0$ and, for any fixed $v\in S\ne\varnothing$,
\begin{equation}\label{comp17:matching-recurrence}
 M_H(S)=\max\left(
 \{M_H(S\setminus\{v\})\}\cup
 \{1+M_H(S\setminus\{v,w\}):w\in N_H(v)\cap S\}\right).
\end{equation}
Indeed, a maximum matching either leaves $v$ unmatched or matches it
to one of its neighbours. Evaluating this recurrence in an order in
which proper subsets precede their supersets gives exact matching
numbers for all $2^s$ subsets, in both colours.

The legal extension family $\mathcal E(H)$ is obtained by testing every
possible red neighbourhood of one added vertex, using
Lemma~\ref{th17:extension-test}. The tests account for a fan centred at
the new vertex, a fan centred at an old vertex, and each forbidden
clique involving the new vertex. The matching tables evaluate the
degree and residual-matching conditions in
\eqref{th17:profile-types} for each of the 49 types in
\eqref{th17:type-universe}, together with the maximum-degree boundary
flags in~\eqref{th17:boundary-profile}.

The resulting profile is
\[
 (\Delta(H),\e(H),L(H),U(H),T(H),P(H),\rho(H),\eta(H)),
\]
as defined in~\eqref{th17:profile-tuple}. Seeds with equal profiles therefore
give the same results in every pair test.

The four successive filters are the maximum-degree test
\eqref{th17:degree-pair}, overlap of the cross-edge intervals
\eqref{th17:cross-sum-test}, the two joint-type inclusions
\eqref{th17:type-inclusions}, and the saturated-boundary flags
\eqref{th17:boundary-flags}.  The last test is applied only when the
maximum internal degrees sum to ten.  Their necessity was proved in
the structural reduction; passing the tests does not assert that a
pair has an admissible completion.

The 8,812 left seeds yield 6,860 profiles, and the 115,174 right seeds
yield 60,041 profiles.  Pair counts are computed on the profile groups,
with the product of the two group sizes as weight.  Table~\ref{comp17:filters}
gives the cumulative pair counts.

\begin{table}[htbp]
\centering
\caption{Necessary-condition reduction of the ordered $8+9$ seed pairs.}
\label{comp17:filters}
\begin{tabular}{lr}
\toprule
Conditions imposed cumulatively & Remaining pairs\\
\midrule
All $8,812\times115,174$ ordered pairs & 1,014,913,288\\
$\Delta(G_A)+\Delta(G_C)\le10$ & 682,271,903\\
Cross-edge intervals~\eqref{th17:cross-sum-test} & 682,057,873\\
Joint centre types~\eqref{th17:type-inclusions} & 165,925,659\\
Saturated boundary~\eqref{th17:boundary-flags} & 88,294,674\\
\bottomrule
\end{tabular}
\end{table}

\subsection{Enumeration of the retained pairs}

The seed groups are ordered by the first occurrence of each profile in
the fixed seed order; seed indices inside a group are increasing.
For each left group, compatible right groups are visited in their stored
order, and the Cartesian product inside a compatible group pair is
enumerated with the right index varying fastest.  Prefix sums of these
block sizes give a bijection between retained ordered pairs and the
zero-based case indices
\[
 0,1,\ldots,88\,294\,673.
\]
The unequal part sizes are kept in their specified order; the pair
$(G_A,G_C)$ is not identified with its transpose. These indices identify
the exceptional pairs in Section~\ref{tail:section}.

\section{Search formulation and computational results}
\label{comp17:search}

\subsection{The finite formula for a fixed pair}

For a retained pair, the 72 entries of $X$ are the primary variables
of a Boolean satisfiability (SAT) problem. We use the MiniSat
solver~\citep{MiniSat}. The formula is expressed in conjunctive normal
form: a conjunction of clauses, each a disjunction of variables or
their negations.  All edges inside the two seeds are fixed.  The program adds
the degree constraints
\[
 \max\{0,7-d_H(v)\}\le c(v)
 \le\min\{17-|H|,10-d_H(v)\}
\]
for every vertex $v$ in a seed $H$, and all 72 swap inequalities~\eqref{eq:maxcut-swap}.
The latter are encoded as lower bounds on the fifteen other cross-edges
incident with the two vertices, because the common edge cancels.
Sequential threshold variables encode the cardinality constraints;
these auxiliary variables do not represent additional graph edges.

Several necessary fan clauses are inserted before calling the solver.
If a centre already has an internal matching of size $t$ in its
neighbourhood, then the opposite block cannot supply a matching of size
$3-t$ in red or $4-t$ in blue among its same-colour cross-neighbours.
The implementation enumerates the endpoint sets of such matchings.  A
fixed endpoint set produces one clause, independently of how many
matchings it contains.  Additional binary clauses cover a centre whose
internal neighbourhood already reaches the allowed matching size and
has a neighbour that can be left uncovered by such a matching: that
neighbour and the centre cannot both have a same-colour edge to the
same vertex of the other block.

The remaining fan constraints are added as violations are found.  MiniSat is asked
for a model of the current formula.  This model determines a complete
colouring of the 17 vertices.  For every centre and both colours, a
matching search tests whether its colour-neighbourhood contains a
matching of size three or four, respectively.  Each detected fan gives
a clause forbidding all of its cross-edges from simultaneously having
the colour of the detected fan.  Fixed internal edges are simplified out of the
clause.  The same procedure adds clauses for detected red $K_6$ and
blue $K_8$, using their independently proved exclusion.  The solver
retains the resulting clauses and its learned clauses within the
current instance and repeats the search.

Every added clause follows from a necessary property of an admissible
colouring. A model with no violation is checked again as a graph and
provides a candidate colouring. An unsatisfiable formula excludes the
fixed pair under the degree and maximum-cut restrictions. Each
new constraint eliminates the current violating cross-edge matrix;
there are finitely many such matrices, so the procedure is exhaustive
when allowed to finish. The instances are independent and can be
processed in parallel.

\subsection{Symmetry reduction}

In the SAT stage, automorphisms of the fixed seeds act on the rows and
columns of $X$. For each selected automorphism
$g\in\Aut(G_A)\times\Aut(G_C)$, the program imposes
\[
 X\le_{\mathrm{lex}}g(X),
\]
using row-major order and $0<1$. The unequal block
sizes prevent a block-transpose symmetry.  Every orbit of feasible
complete gluings contains a lexicographically smallest element, and
that element satisfies all selected inequalities.  Hence the
restriction preserves at least one feasible member of every orbit.
This argument uses the symmetry of the underlying admissible-gluing
problem; the formulas obtained after adding individual constraints need not be
invariant under the whole group.

Generator inequalities can leave several representatives in one orbit.
Accordingly, canonical seed generation and the partial symmetry
reduction of completed gluings have different guarantees.  The
computation does not claim to enumerate each seventeen-vertex colouring
exactly once.  Its purpose is to retain a complete search space while
excluding repeated work where the fixed seeds provide a usable symmetry.

\subsection{Numerical outcomes}
\label{comp17:results}

The large-scale search was carried out by the authors on their computing
cluster using 32 CPU cores, with a wall-clock running time of at most
24 hours. The verified computation excludes $88,294,634$ of the
$88,294,674$ retained pairs. The seed catalogues, retained pair
catalogue and bulk exclusions have been verified. The forty exceptional pairs involve eight distinct left seeds
and twenty-four distinct right seeds. Fifteen pairs have left seed
$80$ and nineteen have left seed $280$; the remaining six have six
other left seeds. Section~\ref{tail:section} excludes the first
thirty-four pairs by a counting lemma and the other six by independently
checked finite-domain certificates.

\begin{table}[htbp]
\centering
\caption{Classification of the retained seed pairs by exclusion method.
The verification procedures are described in Section~\ref{sec:conclusion}.}
\label{comp17:outcomes}
\begin{tabular}{lr}
\toprule
Exclusion method & Pairs\\
\midrule
Main Boolean search & 88,294,634\\
Cross-edge counting lemma & 34\\
Independently checked finite-domain certificates & 6\\
\midrule
Total & 88,294,674\\
\bottomrule
\end{tabular}
\end{table}

The three counts therefore sum to
\begin{equation}\label{comp17:accounting}
 88\,294\,634+34+6=88\,294\,674.
\end{equation}
The certificate statistics for the six pairs appear in
Table~\ref{tail:six-certificates}. Implementation details and execution
records are described in Appendix~\ref{app:reproducibility}.

\section{Exclusion of the forty exceptional seed pairs}
\label{tail:section}

The forty exceptional seed pairs are excluded by two arguments:
thirty-four by a common counting lemma and six by finite-domain
certificates checked independently of their generating program.
Throughout this section, $A$ and $C$ denote the parts of orders
eight and nine, respectively; $d_A(a)$ and $d_C(b)$ are internal red
degrees, $c(v)$ is a red cross-degree, and $x_{ab}$ is the indicator of a
red edge between $a\in A$ and $b\in C$.  The partition is chosen to
maximise the number of red cross-edges.  In particular,
\begin{equation}
 c(a)+c(b)-2x_{ab}\ge d_A(a)+d_C(b)
 \qquad(a\in A,\ b\in C),
 \label{tail:swap}
\end{equation}
as established in~\eqref{eq:maxcut-swap}.  The degree bounds in
Corollary~\ref{cor:degree17} give $7\le d_R(v)\le10$.

The maximum-cut condition is essential to the interpretation of these
exclusions.  An arbitrary partition of a colouring need not
satisfy~\eqref{tail:swap}; every colouring has a maximum red cut
with part sizes eight and nine that does satisfy it.  Thus it suffices to exclude each seed pair when it
occurs as such a partition.  None of the certificates in this section
uses symmetry-breaking constraints.

\subsection{A counting argument for thirty-four pairs}
\label{tail:counting}

For a vertex $b\in C$, write $N^{\rm blue}_{C}(b)$ for its blue
neighbourhood within $C$.  All matching statements below concern ordinary
matchings, and hence their edges have pairwise disjoint endpoints.

\begin{lem}
\label{tail:counting-lemma}
Suppose that an $(F_3,F_4)$-avoiding colouring has an $8+9$ partition
satisfying~\eqref{tail:swap} and $d_R(v)\le10$ for every vertex.  Assume
that $A=L\mathbin{\dot\cup}H$, where $|L|=6$ and $|H|=2$, and that the
following properties hold.
\begin{enumerate}
 \item The red graph on $A$ is triangle-free, every $h\in H$ has
       $d_A(h)=5$, and the blue graph on $H\cup\{p,q\}$ contains a
       two-edge matching for every distinct $p,q\in L$.
 \item Every seven-vertex subset of $C$ induces a red three-edge
       matching, and $d_C(b)\ge1$ for every $b\in C$.
 \item If $d_C(b)\ge3$, then the blue graph on
       $N^{\rm blue}_{C}(b)$ contains a two-edge matching.  If
       $d_C(b)=2$, then this graph contains a three-edge matching.
 \item If $d_C(z)=1$, then for every
       $y\in N^{\rm blue}_{C}(z)$ the blue graph on
       $N^{\rm blue}_{C}(z)\setminus\{y\}$ contains a three-edge
       matching.
\end{enumerate}
Let $n_1$, $n_2$, and $n_{\ge3}$ count the vertices of $C$ having internal
red degree one, two, and at least three, respectively.  Then
\begin{equation}
 8n_1+6n_2+5n_{\ge3}\le e_R(A,C)\le46.
 \label{tail:counting-bounds}
\end{equation}
Consequently, a seed pair satisfying these hypotheses is impossible
whenever $8n_1+6n_2+5n_{\ge3}>46$.
\end{lem}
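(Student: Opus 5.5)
The plan is to prove the two inequalities in \eqref{tail:counting-bounds} separately: the upper bound by summing row sums over $A$, and the lower bound by summing column sums over $C$. Every column bound will come from the blue fan condition at $b$ combined with hypotheses~1--4.

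\emph{Upper bound.} First, every $a\in A$ satisfies $c(a)\le6$. If $a$ had seven red cross-neighbours, hypothesis~2 would give a red three-edge matching among them, and Lemma~\ref{th17:fan-criterion} would produce a red $F_3$ centred at $a$. Second, for $h\in H$ we have $d_A(h)=5$ and $d_R(h)\le10$, so $c(h)\le5$. Summing these bounds gives
\[
 e_R(A,C)=\sum_{a\in A}c(a)\le 6|L|+5|H|=36+10=46 .
\]

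\emph{Lower bound, vertices with $d_C(z)=1$.} Let $d_C(z)=1$, so $N^{\rm blue}_C(z)$ has seven vertices. Suppose some $a\in A$ is a blue neighbour of $z$. If $a$ had a blue neighbour $y\in N^{\rm blue}_C(z)$, hypothesis~4 would give a blue three-edge matching on $N^{\rm blue}_C(z)\setminus\{y\}$. Adding the edge $ay$ gives a blue $4K_2$ in the blue neighbourhood of $z$, hence a blue $F_4$ centred at $z$. So $a$ is red to all seven vertices of $N^{\rm blue}_C(z)$, contradicting $c(a)\le6$. Hence $z$ is red to all of $A$, and $c(z)=8$.

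\emph{Lower bound, vertices with $d_C(b)=2$.} Let $S_b^{\rm blue}\subseteq A$ be the set of blue cross-neighbours of $b$. By hypothesis~3, the blue neighbourhood of $b$ inside $C$ already carries a blue three-edge matching. So $S_b^{\rm blue}$ can contain no blue edge, since such an edge would complete a blue $4K_2$. Thus $S_b^{\rm blue}$ is a red clique in $A$. Because $R[A]$ is triangle-free, $|S_b^{\rm blue}|\le2$ and $c(b)\ge6$.

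\emph{Lower bound, vertices with $d_C(b)\ge3$.} Hypothesis~3 now gives a blue two-edge matching inside $C$, so $B[S_b^{\rm blue}]$ has no two-edge matching. I must show this forces $|S_b^{\rm blue}|\le3$. Suppose instead $|S_b^{\rm blue}|\ge4$ and take any four of its vertices. A four-vertex graph without a two-edge matching is a star, a triangle, or a subgraph of these. Its red complement then contains a triangle unless the blue graph is exactly a triangle plus a vertex that is red to all three. Since $R[A]$ is triangle-free, that is the only possibility; in particular a fifth vertex cannot be added, so $|S_b^{\rm blue}|=4$ with exactly this shape. If $H\subseteq S_b^{\rm blue}$, then taking $\{p,q\}=S_b^{\rm blue}\setminus H$ in hypothesis~1 gives a blue two-edge matching on $S_b^{\rm blue}$, a contradiction. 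The remaining case, where some $h\in H$ lies outside $S_b^{\rm blue}$, is where I expect the main obstacle.

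For that case I would first use $d_A(h)=5$ in the triangle-free graph $R[A]$. The red neighbourhood of $h$ in $A$ is an independent set of five vertices in red, hence a blue $K_5$. It must therefore meet the blue triangle-plus-apex configuration on $S_b^{\rm blue}$ in a way that produces two disjoint blue edges, or else force $S_b^{\rm blue}$ to lie essentially inside the three red non-neighbours of $h$. If that does not settle it, I would fall back on the swap inequality~\eqref{tail:swap} for the pair $(h,b)$, together with $c(h)\le5$ and $d_R(b)\le10$, to bound $c(b)$ from below.

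Granting $c(b)\ge5$ for these vertices, summing the column bounds $8$, $6$ and $5$ over $C$ gives $8n_1+6n_2+5n_{\ge3}\le e_R(A,C)$. Hypothesis~2 guarantees $d_C(b)\ge1$, so every vertex of $C$ is counted in one of the three classes. Combining with the upper bound proves \eqref{tail:counting-bounds}, and the final impossibility statement is immediate.
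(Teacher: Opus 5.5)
Your upper bound and your treatment of the cases $d_C(z)=1$ and $d_C(b)=2$ are correct and agree with the paper. The gap is in the case $d_C(b)\ge3$. There you stop at the configuration in which some $h\in H$ is a red neighbour of $b$, and then simply grant $c(b)\ge5$.

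Your first idea for this case, using the blue $K_5$ on the red neighbourhood of $h$ inside $A$, cannot succeed, because the obstruction is not internal to $A$. Take $L$ to be a blue $K_6$ with each $h\in H$ red to five vertices of $L$, as in seed $80$. This red graph is triangle-free and satisfies property~1. Yet $S=\{h',p_1,p_2,p_3\}$, with $p_1,p_2,p_3\in N_R(h')\cap L$, induces exactly a blue triangle plus a red apex, and it misses the other vertex $h$ of $H$. So no argument using only the red graph on $A$ rules out four blue cross-neighbours of $b$.

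What is needed is the cross-edge information in the maximum-cut condition, which is your fallback, and it closes the case at once. Since $x_{hb}=1$, \eqref{tail:swap} gives $c(b)+c(h)-2\ge d_A(h)+d_C(b)=5+d_C(b)$. Combined with $c(h)\le5$, this yields $c(b)\ge d_C(b)+2\ge5$. The bound $d_R(b)\le10$ is not needed here.

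With this step in place, your classification of four-vertex blue graphs without a two-edge matching is correct but unnecessary. The paper argues directly. If $c(b)\le4$, then $b$ has at least four blue neighbours in $A$. If both vertices of $H$ are among them, there remain two blue neighbours $p,q\in L$. Property~1 on $H\cup\{p,q\}$ and property~3 inside $C$ then give a blue $4K_2$ in $N_B(b)$, which is a blue $F_4$. Otherwise some $h\in H$ is red to $b$, and the swap computation above contradicts $c(b)\le4$. This is why the maximum-cut hypothesis is genuinely essential for the vertices with $d_C(b)\ge3$.
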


\begin{proof}
If a vertex $a\in A$ had seven red neighbours in $C$, property~2 would
give three disjoint red edges among these neighbours.  Together with
centre $a$, they would form a red $F_3$.  Therefore $c(a)\le6$ for every
$a\in A$.  For $h\in H$, the total red degree bound is stronger:
$c(h)\le10-d_A(h)=5$.  Counting red cross-edges from $A$ gives
\begin{equation}
 e_R(A,C)=\sum_{a\in A}c(a)
 \le6\cdot6+2\cdot5=46.
 \label{tail:upper46}
\end{equation}

We next count the same edges from $C$.  First suppose that $d_C(b)\ge3$
and, for a contradiction, that $c(b)\le4$.  Then $b$ has at least four
blue neighbours in $A$.  If both vertices of $H$ were blue neighbours of
$b$, we could choose two further blue neighbours $p,q\in L$.  Property~1
would supply a blue two-edge matching in $H\cup\{p,q\}$.
Property~3 supplies a second blue two-edge matching inside
$N^{\rm blue}_{C}(b)$.  Their vertex sets lie in different parts, so the
two matchings together have four disjoint edges in the blue neighbourhood
of $b$, a forbidden blue $F_4$.  Hence $b$ must have a red neighbour
$h\in H$.  Applying~\eqref{tail:swap} to this red edge yields
\[
 c(b)+c(h)-2\ge d_C(b)+5.
\]
Since $c(h)\le5$, this implies
$c(b)\ge d_C(b)+2\ge5$, contrary to $c(b)\le4$.  Thus
\begin{equation}
 c(b)\ge5\qquad\text{if }d_C(b)\ge3.
 \label{tail:degree-three}
\end{equation}

Next suppose that $d_C(b)=2$.  By property~3, the internal blue
neighbourhood of $b$ already contains three disjoint blue edges.  There
can be no blue edge between two blue cross-neighbours of $b$, since this
edge would be disjoint from the internal three-edge matching and would
complete a blue $F_4$.  The blue cross-neighbours of $b$ therefore form a
red clique in $A$.  The red graph on $A$ is triangle-free, so this clique
has at most two vertices.  Consequently,
\begin{equation}
 c(b)\ge8-2=6\qquad\text{if }d_C(b)=2.
 \label{tail:degree-two}
\end{equation}

Finally, let $z\in C$ have $d_C(z)=1$.  Suppose that some $a\in A$ is a
blue neighbour of $z$.  The bound $c(a)\le6$ shows that $a$ has at least
three blue neighbours in $C$.  Besides $z$, there are at least two such
vertices.  Since $z$ has only one red neighbour within $C$, at least one
of them, say $y$, belongs to $N^{\rm blue}_{C}(z)$.  Property~4 gives
three disjoint blue edges on
$N^{\rm blue}_{C}(z)\setminus\{y\}$.  The blue edge $ay$ is disjoint
from these three edges, and all eight endpoints are blue neighbours of
$z$.  This again produces a blue $F_4$.  It follows that every vertex
of $A$ is a red neighbour of $z$, and hence
\begin{equation}
 c(z)=8\qquad\text{if }d_C(z)=1.
 \label{tail:degree-one}
\end{equation}
Summing~\eqref{tail:degree-three}--\eqref{tail:degree-one} over $C$
proves the lower bound in~\eqref{tail:counting-bounds};
\eqref{tail:upper46} proves the upper bound.
\end{proof}

The lemma uses only the upper total-degree bound; its conclusion is
independent of the lower degree bound and of any symmetry restrictions.

The left seeds of the thirty-four cases have catalogue indices $80$ and
$280$, occurring fifteen and nineteen times, respectively.  Their
structure gives a direct proof of property~1.  Use the original
seed labels $L=\{0,1,2,3,4,5\}$ and $H=\{6,7\}$.  Seed $80$ is the
red graph obtained from $K_{6,2}$ with bipartition $(L,H)$ by deleting
edges $47$ and $56$.  Seed $280$ is obtained from the same $K_{6,2}$ by
deleting edges $07$ and $56$ and adding edge $05$.  Both graphs are
triangle-free, and both vertices in $H$ have red degree five.  For seed
$80$, the blue edges $67$ and $pq$ give the required matching for every
pair $p,q\in L$.  They also suffice for seed $280$, except when
$\{p,q\}=\{0,5\}$; in that case the blue edges $07$ and $56$ give the
matching instead.

There are twenty-two distinct right seeds among these thirty-four
pairs.  Table~\ref{tail:right-seeds} identifies all of them and records
the numerical lower bound given by the lemma.  The indices identify the representatives in the seed catalogue; their
labelled adjacencies are supplied with the accompanying data. The last
column counts occurrences among the thirty-four pairs, not distinct graphs.

\begin{table}[htbp]
\centering
\small
\caption{Right-seed data for the counting argument.  In every row,
the four finite hypotheses of Lemma~\ref{tail:counting-lemma} hold.
The lower bound is $\ell_C=8n_1+6n_2+5n_{\ge3}$; the upper bound is
$46$ in every case.}
\label{tail:right-seeds}
\begin{tabular}{r r r r r r}
\toprule
Index & $n_1$ & $n_2$ & $n_{\ge3}$ & $\ell_C$ & Pairs\\
\midrule
30580 & 0 & 3 & 6 & 48 & 2\\
30592 & 0 & 2 & 7 & 47 & 1\\
30727 & 0 & 3 & 6 & 48 & 2\\
30740 & 1 & 1 & 7 & 49 & 1\\
30744 & 0 & 2 & 7 & 47 & 1\\
30885 & 0 & 3 & 6 & 48 & 2\\
30887 & 0 & 2 & 7 & 47 & 2\\
33151 & 0 & 2 & 7 & 47 & 1\\
33154 & 0 & 2 & 7 & 47 & 2\\
37913 & 0 & 2 & 7 & 47 & 2\\
37999 & 0 & 3 & 6 & 48 & 1\\
38012 & 0 & 2 & 7 & 47 & 2\\
38219 & 0 & 3 & 6 & 48 & 1\\
42513 & 0 & 3 & 6 & 48 & 2\\
42846 & 0 & 3 & 6 & 48 & 2\\
45740 & 0 & 2 & 7 & 47 & 2\\
47671 & 1 & 0 & 8 & 48 & 1\\
65954 & 0 & 2 & 7 & 47 & 1\\
66025 & 0 & 2 & 7 & 47 & 2\\
68352 & 0 & 3 & 6 & 48 & 1\\
68564 & 0 & 3 & 6 & 48 & 1\\
69502 & 0 & 2 & 7 & 47 & 2\\
\bottomrule
\end{tabular}
\end{table}

All matching conditions in the lemma were checked using explicit matching
witnesses.  For each right seed, the accompanying certificate specifies
three disjoint red edges for every one of its $\binom97=36$
seven-vertex subsets.  It also specifies the blue matchings required in
each internal neighbourhood and, for the two right seeds with red
leaves, in every neighbourhood with one blue neighbour deleted.  The
verifier checks the stated edge colours, endpoint disjointness, and
membership in the prescribed vertex sets directly.  It also checks the
four-vertex blue matchings and triangle-freeness of the left seeds.
These explicit matchings allow the hypotheses of the lemma to be
checked independently.
The right seeds with a red leaf are precisely $30740$ and $47671$.

For eighteen of the thirty-four pairs, the lower bound in
Table~\ref{tail:right-seeds} is $47$; for fifteen it is $48$; and for one
it is $49$.  Each contradicts the common upper bound $46$.  This
excludes all thirty-four pairs under the maximum-cut condition.

\subsection{Finite-domain certificates for the remaining six pairs}
\label{tail:csp}

For the remaining six pairs, a finite-domain search produces a complete
contradiction certificate.  Its variables are the eight row
masks $m_i\in\{0,1\}^{9}$ and the nine column masks
$q_j\in\{0,1\}^{8}$ of the red cross-adjacency matrix.  Masks are used
here as binary vectors, and $|m|$ denotes Hamming weight.  The entry
$m_i(j)=q_j(i)$ is $x_{ij}$.  Unlike the main SAT search, the certificate
generator uses no Boolean satisfiability library.

A \emph{partial coloured graph} consists of the two completely coloured
seeds and any cross-edges fixed by assigned rows or columns.  A
cross-edge with no assigned value belongs to neither colour in this
partial graph.  In particular, an unknown edge must not be treated as
blue merely because it is not yet red.  Any monochromatic fan whose
edges are already known occurs in every completion.

For each $i\in A$, the initial row domain $D_i$ contains every mask $m$
with
\begin{equation}
 7\le d_A(i)+|m|\le10
 \label{tail:row-degree}
\end{equation}
unless the partial coloured graph obtained by assigning this row,
together with the fixed seeds, already contains a red $F_3$ or a blue
$F_4$.  Column domains $E_j$ are
defined analogously using $7\le d_C(j)+|q|\le10$.  A rejection because
of a fan is accompanied by that fan's centre and an explicit matching
of three or four edges in its monochromatic neighbourhood.  The
certificate also accounts for every pattern satisfying the degree
bound: it is either retained or rejected with such a witness.

There are two compatibility relations.  Two row candidates $m\in D_i$
and $m'\in D_k$, for $i\ne k$, are incompatible if their simultaneous
assignment already produces a forbidden fan in the partial coloured
graph.  An incompatible pair used in propagation carries an explicit
fan witness.  A row candidate $m\in D_i$ and a column candidate
$q\in E_j$ are compatible only if
\begin{equation}
 m(j)=q(i)=x
 \quad\text{and}\quad
 |m|+|q|-2x\ge d_A(i)+d_C(j).
 \label{tail:row-column}
\end{equation}
The second requirement is exactly the maximum-cut swap inequality;
it is checked by integer arithmetic.

\begin{prop}
\label{tail:pruning-sound}
Suppose an $(F_3,F_4)$-avoiding completion of the two seeds satisfies
the degree bounds and~\eqref{tail:swap}.  Its row and column masks
belong to the initial domains.  Furthermore, deleting a candidate
that has no compatible candidate in some other row or column cannot
delete the masks of this completion.
\end{prop}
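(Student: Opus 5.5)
The proof is a monotonicity argument: every fact about a partial coloured graph that the certificate uses is inherited by the completion, because the completion is a refinement. Fix an admissible completion with row masks $m_i^\ast$ and column masks $q_j^\ast$. The key observation is that a partial coloured graph contains only edges whose colours agree with the completion. Hence any monochromatic fan visible in a partial graph consistent with the completion is also a fan in the completion itself. For this I would use Lemma~\ref{th17:fan-criterion}: the matching in the centre's neighbourhood uses only known edges of the right colour, and all of these edges persist. This is exactly why unknown edges must not be treated as blue.

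\emph{Initial domains.} First, $d_A(i)+|m_i^\ast|=\dR(i)$, which lies in $[7,10]$ by Corollary~\ref{cor:degree17}. Second, the partial graph obtained by assigning only row $m_i^\ast$ is a subconfiguration of the completion. By the key observation it contains no red $F_3$ or blue $F_4$, so $m_i^\ast$ is not rejected and lies in $D_i$. Columns are handled in the same way, so $q_j^\ast\in E_j$.

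\emph{Surviving a deletion.} Next I show that $m_i^\ast$ always has a compatible partner in every other row and column, so a deletion step, which only removes a candidate with no compatible partner somewhere, never removes it.
\begin{itemize}
\item For another row $k$, the partner is $m_k^\ast\in D_k$. Assigning both rows gives a partial graph contained in the completion, hence fan-free, so the pair is not incompatible.
\item For a column $j$, the partner is $q_j^\ast$. It satisfies $m_i^\ast(j)=q_j^\ast(i)=x_{ij}$ by definition. The swap inequality in~\eqref{tail:row-column} is literally~\eqref{tail:swap} for the pair $(i,j)$, since $|m_i^\ast|=c(i)$ and $|q_j^\ast|=c(j)$.
\end{itemize}
The symmetric argument covers column candidates.

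\emph{Iteration.} Finally, I would phrase this as an invariant, because deletions are repeated. Suppose all the completion's masks survive in the current domains. Then each of them still has its completion partner available, so none of them can be deleted at the next step. Induction on the number of deletions then finishes the proof. The only step needing care is the key observation, namely that a fan witness in a partial graph transfers to the completion. This is where the convention that unknown edges have no colour is essential; the rest is bookkeeping.
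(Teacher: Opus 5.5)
Your proof is correct and follows essentially the same route as the paper. In both, fan witnesses in a partial coloured graph persist in the completion, so the completion's masks pass the initial degree and fan tests and are pairwise compatible, with row--column compatibility reducing to \eqref{tail:swap}. Induction on the sequence of deletions then shows that each such mask keeps its completion partner as a support.
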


\begin{proof}
The degree bounds imply that the completion's masks pass the initial
degree tests.  A fan witnessed after assigning only one of its rows or
columns would also be present in the completion, so none of these
masks can be excluded by an initial fan witness.  The same
argument shows that any two of its row masks are compatible.  Every
row and column intersect consistently and satisfy
\eqref{tail:row-column}.  Hence, whenever a candidate from the
completion remains in a domain, the corresponding mask in every other
domain supplies a compatible support, provided the completion's other
masks have not previously been removed.  Induction on the sequence of
deletions establishes the assertion.
\end{proof}

The generator repeatedly performs these deletions until no further
deletion applies.  It then either reaches an empty domain, finds a
forbidden fan using the rows whose domains have become singletons, or
chooses a non-singleton row domain and branches over every one of its
remaining masks.  Choosing a row with the smallest non-singleton
domain is a search heuristic only; the proof relies on exhaustive
coverage of that domain.  Every branch fixes at least one previously
non-singleton row, so a path has at most eight such branch operations.
No candidate is discarded because of graph isomorphism or a lexicographic
symmetry convention in the recorded certificates.

\begin{prop}
\label{tail:certificate-sound}
If the described certificate passes the independent verifier and every
leaf is a checked contradiction, then the given seed pair has no
$(F_3,F_4)$-avoiding completion satisfying the degree bounds and
\eqref{tail:swap}.
\end{prop}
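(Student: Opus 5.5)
The argument is an induction over the certificate tree, in which we track one hypothetical completion. Suppose the seed pair has an $(F_3,F_4)$-avoiding completion $X^*$ that satisfies the degree bounds and \eqref{tail:swap}, and let $m_i^*$ and $q_j^*$ be its row and column masks. The invariant to maintain along one root-to-leaf path is this: at every node visited, each $m_i^*$ still lies in the current domain $D_i$ and each $q_j^*$ still lies in $E_j$. If this holds, some leaf must be consistent with $X^*$, and we derive a contradiction from the fact that every leaf is a verified contradiction.

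The first step is the root. Proposition~\ref{tail:pruning-sound} shows that the starred masks belong to the initial domains. The verifier matters here in a specific way. Each rejected initial candidate carries an explicit centre and matching, and the verifier confirms that every edge used has a known colour in the partial coloured graph. Since $X^*$ agrees with every known edge, any such witness would also appear in $X^*$. Because $X^*$ is admissible, no starred mask can have been rejected. The verifier's accounting of all degree-feasible patterns guarantees that nothing was silently omitted.

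The second step is propagation. Each recorded deletion is replayed and checked by the verifier. It is either a support failure with respect to \eqref{tail:row-column}, which is checked by exact integer arithmetic and coordinate agreement, or it uses a row--row incompatibility backed by a fan witness. The second part of Proposition~\ref{tail:pruning-sound}, applied inductively along the recorded deletion sequence, shows that no starred mask is deleted. The point is that the starred masks support each other, and any fan witness on a pair of starred rows would be a fan in $X^*$.

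The third step is branching. When the certificate branches on a row $i$, the verifier checks that the children cover every mask remaining in $D_i$. Since $m_i^*$ is in $D_i$, it is chosen in some child. Restricting $D_i$ to $\{m_i^*\}$ keeps the invariant, and propagation continues as before. Each branch fixes a further row, so every path is finite and reaches a leaf.

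The leaf is the main obstacle, because we must make sure every leaf type is a contradiction for $X^*$ specifically. An empty domain contradicts the invariant directly. A leaf closed by a fan among rows fixed as singletons forces those rows to equal the starred rows, so the checked fan, with centre and disjoint matching verified edge by edge on known colours only, lies in $X^*$. This contradicts admissibility. The key subtlety, which I would stress, is that unknown edges are never counted as blue, so every witness is valid in all completions. With this in place, no admissible maximum-cut completion exists.
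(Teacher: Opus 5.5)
Your proposal is correct and follows essentially the same route as the paper's proof. Both assume a completion and use Proposition~\ref{tail:pruning-sound} to keep its masks in every domain through the verified deletions. At each branch both follow the child carrying the completion's row value, which exists by the verifier's coverage check. Both then rule out the two leaf types: an empty domain contradicts survival of the completion's masks, and a witnessed fan on known edges would also lie in the completion. Your added remarks on the verifier's edge-by-edge checks only make explicit what the paper leaves to Proposition~\ref{tail:pruning-sound}.
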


\begin{proof}
Assume such a completion exists.  By
Proposition~\ref{tail:pruning-sound}, its row and column masks survive
every sound deletion along any search path consistent with the
completion.  At each branch there is a child assigning exactly the
completion's value to the selected row, because the verifier checks
that the children contain every remaining value exactly once.  Follow
this child.  The finite tree eventually reaches a leaf.  That leaf
cannot have an empty domain, since the completion's masks survive, and
cannot contain a witnessed forbidden fan, since all edges of that fan
also occur in the completion.  Both permitted leaf types therefore
give contradictions.
\end{proof}

The independent verifier does not solve a matching problem.  To check
a claimed fan, it verifies that the matching endpoints are distinct
from one another and from the centre, that the required spokes and
matching edges are all known, and that their colours agree.  It then
replays each domain deletion using the current domains and the
explicit incompatibility witnesses or~\eqref{tail:row-column}.
Finally, it verifies exhaustive branch coverage and each leaf
contradiction.  Thus an error in the generator's matching search would
not by itself cause an unsupported exclusion to pass verification.

Table~\ref{tail:six-certificates} gives the six case identities and
certificate statistics.  The case identifier is the stable index in
the original filtered pair enumeration.  A node count of one means
that propagation alone produced a contradiction at the root, with no
branching.  The fan-witness count includes the initial pattern
rejections, the distinct row-pair rejection witnesses retained in the
certificate, and any fan contradictions at leaves.  The deletion
count records domain deletions in the replayed tree; it does not count
the initial degree exclusions.

\begin{table}[htbp]
\centering
\small
\caption{Independently checked finite-domain certificates for the six
remaining cases.  All certificates were generated without symmetry
restrictions.}
\label{tail:six-certificates}
\begin{tabular}{r r r r r r}
\toprule
Case & Left seed & Right seed & Nodes & Fan witnesses & Deletions\\
\midrule
51996400 & 3477 & 1235  & 911   & 33,134 & 62,671\\
64095209 & 4228 & 1235  & 917   & 53,482 & 39,917\\
69103064 & 4877 & 68943 & 1     & 31,625 & 1,086\\
69257548 & 4893 & 68943 & 1     & 31,766 & 1,126\\
71867158 & 5479 & 1235  & 5,668 & 33,730 & 343,586\\
82561944 & 6899 & 30887 & 1     & 14,551 & 384\\
\midrule
Total & & & 7,499 & 198,288 & 448,770\\
\bottomrule
\end{tabular}
\end{table}

The counting lemma and these six independently checked certificates
therefore exclude all forty exceptional pairs under the degree and
maximum-cut conditions of the structural reduction.

\section{Ramsey consequence and verification}
\label{sec:conclusion}

Let $\mathcal C$ denote the verified catalogue of retained ordered seed
pairs, and let $\mathcal H\subseteq\mathcal C$ be the forty exceptional
pairs excluded in Section~\ref{tail:section}. We now combine the
structural reduction and the completed computation.

\begin{thm}
\label{main:ramsey}
\[
 R(F_3,F_4)=17.
\]
\end{thm}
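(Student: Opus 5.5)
The proof combines the lower-bound construction with the structural reduction and the completed exclusions. The lower bound $R(F_3,F_4)\ge17$ is Proposition~\ref{th17:lower-bound}, via the colouring whose red graph is $K_{8,8}$. For the upper bound it suffices to show that no admissible colouring of $K_{17}$ exists. Any colouring of $K_N$ with $N\ge17$ restricts to a colouring of $K_{17}$, and admissibility is hereditary under taking induced subcolourings, since a fan in the subcolouring is a fan in the whole.

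So I assume an admissible colouring of $K_{17}$ exists and aim for a contradiction. First I apply Proposition~\ref{th17:coverage}. It produces a maximum red cut $A\mathbin{\dot\cup}C$ with $|A|=8$ and $|C|=9$, whose induced red graphs are isomorphic to representatives $G_A\in\mathcal S_8$ and $G_C\in\mathcal S_9$. These representatives exist by the canonical construction-path induction of Section~\ref{comp17:catalogue}, together with Lemmas~\ref{th17:no-blue-k8} and~\ref{th17:no-red-k6}. The ordered pair passes all four profile filters, and the proposition also supplies an $8\times9$ cross matrix $X$. This matrix satisfies the degree bounds \eqref{th17:vertex-cross-bounds}, the swap inequalities \eqref{eq:maxcut-swap}, and every fan and clique exclusion. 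Because the filters are necessary conditions (Lemmas~\ref{th17:type-filter} and~\ref{th17:boundary-filter}, Propositions~\ref{th17:maximum-cut} and~\ref{th17:cross-intervals}), the pair $(G_A,G_C)$ lies in the verified catalogue $\mathcal C$ of $88,294,674$ retained pairs.

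Next I split into two cases.

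\emph{Case $(G_A,G_C)\in\mathcal C\setminus\mathcal H$.} The main SAT search returned unsatisfiable for this pair. Every base constraint and every lazily added fan or clique clause is implied by admissibility together with the maximum-cut and degree conditions. The lexicographic symmetry-breaking constraints keep the lex-smallest member of the $\Aut(G_A)\times\Aut(G_C)$-orbit of any feasible $X$. An admissible gluing that is also max-cut and degree-feasible therefore has an orbit image satisfying the whole formula, so unsatisfiability excludes it.

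\emph{Case $(G_A,G_C)\in\mathcal H$.} I use Lemma~\ref{tail:counting-lemma} for the thirty-four pairs of Table~\ref{tail:right-seeds}; there $8n_1+6n_2+5n_{\ge3}\ge47>46$. For the remaining six pairs I use Proposition~\ref{tail:certificate-sound} with the verified certificates of Table~\ref{tail:six-certificates}. Both arguments require only~\eqref{tail:swap} and the degree bounds of Corollary~\ref{cor:degree17}, which our partition satisfies.

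The accounting~\eqref{comp17:accounting} shows that the cases cover $\mathcal C$. This gives the contradiction, hence $R(F_3,F_4)\le17$, and with the lower bound the theorem follows.

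The main obstacle is making sure the interface between the mathematical reduction and the computation is sound, not any new combinatorics. Concretely, three points must be checked. The same maximum-cut partition has to be used both in the filters and in the exclusions. Relabelling the seeds to their catalogue representatives has to transport $X$ while preserving every constraint. And symmetry breaking has to be applied only relative to the invariant base problem. I would handle these by stating explicitly that all constraints are invariant under $\Aut(G_A)\times\Aut(G_C)$ acting on a fixed partition, and by citing the verification record of Section~\ref{sec:conclusion} for the correctness of the catalogues and the bulk exclusions.
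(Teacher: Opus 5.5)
Your proposal is correct and follows essentially the same route as the paper. Proposition~\ref{th17:coverage} places the maximum-cut pair in the catalogue $\mathcal C$, and the bulk search excludes every pair outside $\mathcal H$. Lemma~\ref{tail:counting-lemma} and the certificates, made sound by Proposition~\ref{tail:certificate-sound}, exclude the forty exceptional pairs, and Proposition~\ref{th17:lower-bound} gives the lower bound. Your added remarks on lexicographic symmetry breaking and on transporting $X$ under relabelling only make explicit what Sections~\ref{th17:coverage-subsection} and~\ref{comp17:search} already justify.
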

\begin{proof}
An admissible colouring of $K_{17}$ would have a maximum red cut
with parts of sizes eight and nine. Proposition~\ref{th17:coverage}
places its internal graphs, after relabelling, in a pair in $\mathcal C$;
the cross-edge matrix satisfies the degree and swap inequalities.
The verified bulk computation excludes every pair outside $\mathcal H$.
Lemma~\ref{tail:counting-lemma} excludes thirty-four pairs in
$\mathcal H$, and the independently checked certificates, with
soundness supplied by Proposition~\ref{tail:certificate-sound}, exclude
the other six. Hence no admissible colouring of $K_{17}$ exists, giving
$R(F_3,F_4)\le17$. Proposition~\ref{th17:lower-bound} gives the reverse
inequality.
\end{proof}

Verification of the seed catalogues, retained pair catalogue and bulk
exclusions has been completed. The bulk computation is documented by
the enumeration and execution records and the implementation validation
described in Appendix~\ref{app:reproducibility}. The forty exceptional pairs
additionally have independently checked graph-based certificates:
explicit matching witnesses verify the finite premises of the counting
argument for thirty-four pairs, and finite-domain contradiction
certificates with exhaustive branch coverage exclude the remaining six.
These complementary verification procedures establish the computational
conclusion used in Theorem~\ref{main:ramsey}.

\subsection{Discussion}
Neighbourhood matching conditions are effective at two different
scales in this problem. At the catalogue level, they give inexpensive
profiles that substantially reduce the number of seed pairs. For the
exceptional pairs, the same local restrictions yield a common
cross-edge contradiction or incompatibilities between complete
neighbourhood patterns. Explicit fan witnesses make the latter
incompatibilities easy to verify independently of the search that
finds them.

The eight- and nine-vertex parts are small enough for complete canonical
catalogues and exact subset matching tables, while their joint
restrictions constrain the seventeen-vertex colouring. This suggests
a general approach to related fan Ramsey problems: combine canonical
seed generation with a mathematically justified gluing reduction,
and use local witnesses to certify the remaining exclusions.

\subsection{Data availability}
Detailed reproduction instructions are provided in
Appendix~\ref{app:reproducibility}. The associated
source code, seed catalogues, data, and certificates are archived in
Zenodo~\citep{JiangShaoData} at
\url{https://doi.org/10.5281/zenodo.23119016}.
The source repository is available at\par\nobreak\noindent
\url{https://github.com/jianghanxin01/fan-F3F4-reproducibility}.

\subsection*{AI Disclosure}
The research ideas and overall approach were proposed by the authors.
Following the authors' instructions, OpenAI Codex was used to assist with
implementing and testing the programs. The authors carried out the main
computations on their computing cluster. Codex also assisted with checking
the computational results and with editing the manuscript.
The authors take responsibility for the mathematical arguments,
computational results, and final manuscript.

\clearpage
\appendix
\section{Reproducibility guide}
\label{app:reproducibility}
\begingroup
\setlength{\parindent}{0pt}
\setlength{\parskip}{0.35em}

\subsection{Purpose and verification scope}

This appendix gives reproduction and verification procedures for the
completed computation establishing $R(F_3,F_4)=17$. It explains how to
compile the search program, reproduce the seed catalogues and filtering
counts, run the distributed search, and verify the certificates for the
forty exceptional seed pairs. Here $F_t$ is the graph
formed by $t$ triangles with a common vertex and otherwise disjoint vertex
sets. A colouring is admissible if it contains neither a red $F_3$ nor a blue
$F_4$. The mathematical justification of the reductions is given in the
main text.

The bulk-search results and cumulative solver counts reported in the
main text were obtained by running the C++ search program on the authors'
computing cluster using 32 CPU cores. The wall-clock running time of
the large-scale search was at most 24 hours. The local certificate checks described below were
performed separately after the distributed search. Thus the cluster
statistics and the subsequent independent verification results have
separate sources.

The completed computation covers all $88\,294\,674$ retained ordered
pairs of seeds on eight and nine vertices. The bulk solver phase excludes
$88\,294\,634$ pairs. The forty pairs left unresolved by that phase are
all excluded separately: thirty-four by the counting argument in the
main text, with its finite premises checked from the seed graphs, and six
by independently replayable finite-domain certificates. Thus every
retained pair is excluded. Verification of the seed and pair catalogues
and the bulk exclusions has been completed. The quick verification below
checks the forty local exclusions; it does not rerun the bulk search or
establish catalogue completeness by itself.

The distribution provides the inputs and source needed for a fresh full
computation. It does not include the complete historical cluster
checkpoints or a checked propositional proof for each bulk exclusion.
The supplied source, the completeness arguments in the main text, and a
full rerun therefore have different roles from the local certificate
checker. File checksums establish file identity, not mathematical truth.

\subsection{Files and conventions}

Extract \file{fan_F3F4_reproducibility.zip} to a writable directory. All
commands below are issued from the extracted directory containing
\file{README.txt}, unless a different working directory is stated.
The shell examples use Linux syntax. Section~\ref{app:build} gives the
corresponding Windows build procedure.

\begin{center}
\begin{tabular}{@{}p{0.25\linewidth}p{0.68\linewidth}@{}}
\toprule
Location & Contents and purpose \\
\midrule
\file{src/} & C++17 search source, build files, and required third-party
source and licences. \\
\file{data/} & Fixed graph6 seed files, the ordered-pair catalogue, and
the dataset identity. \\
\file{tools/} & Canonical generation, catalogue preparation, and
independent data-validation programs. \\
\file{certificates/} & The forty case specifications and final
counting and finite-domain certificate data. \\
\file{verify40.py} & Entry point for independent verification of the
forty exceptional pairs. \\
\file{combine_results.py} & Checks coverage of a fresh full run and
combines it with the verified local exclusions. \\
\file{docs/} & A standalone version of this reproducibility guide in LaTeX and PDF form. \\
\bottomrule
\end{tabular}
\end{center}

A seed index and a case index are zero-based. Machine numbers are one-based.
The red graph determines the colouring; all other edges of the complete
graph are blue. A graph6 record encodes an undirected graph, using the
vertex order fixed by the corresponding seed file. The production
dataset identifier is \texttt{e38c0723b6a3be74}; the engine identifier is
\texttt{fan17-cpp-1}. The eight-vertex and nine-vertex sides are ordered:
there is no transposition identification between them.

The published files should be kept unchanged. Put new checkpoints,
timing logs, generated certificates, and validation reports in separate
working directories. These outputs are produced by the reader's run and
are not prerequisites for the archived quick check.

\subsection{Requirements}

The forty-case checker requires Python 3.9 or later and only its standard
library. The C++ search requires a C++17 compiler and threads. GCC 9 or
later, a recent Clang, or the x64 Microsoft Visual C++ toolchain can be
used. A CMake build requires CMake 3.15 or later. The included direct
build scripts provide an alternative to CMake. MiniSat and the JSON
header are supplied in the archive; no separate SAT-solver installation
is needed.

Full seed and profile regeneration requires Python 3.11 or later and
the pinned Python packages in \file{tools/requirements.txt}. Graph isomorphism is
handled by the BLISS backend exposed through igraph. Verification of
the six finite-domain certificates uses neither BLISS nor a SAT solver.
No GPU is required. Select the thread count to match the CPU resources
allocated by the cluster scheduler.

A full run writes $176\,590$ checkpoint files. Use storage with adequate
space and support for atomic file replacement. The total runtime depends
on the hardware, thread count, time limits and difficult instances; a
short pilot is not a reliable prediction of completion time.

\subsection{Quick verification of the forty exceptional pairs}
\label{app:quick}

\begin{minipage}{\linewidth}
Run:
\par\begingroup\footnotesize
\begin{verbatim}
python3 verify40.py
\end{verbatim}
\endgroup
\end{minipage}

On Windows, use \texttt{python} instead of \texttt{python3} when that is
the installed command. The program checks the fixed input identities,
the binding of case indices to the seed catalogue, the seed adjacency
data, the finite premises for the thirty-four counting exclusions, and
all six certificate trees. The required outcome is a successful process
exit and a report containing forty excluded cases, with zero unresolved
cases in this local set. The default report is
\file{results/verified40.json}; an alternative path may be selected by
\texttt{--report}.

For the six certificates the expected aggregate replay counts are
$7\,499$ search nodes, $198\,288$ explicit fan witnesses, and
$448\,770$ domain deletions. Each fan witness specifies a centre and
three or four disjoint pairs of peripheral vertices. The checker tests
the colours of all spokes and matching edges directly. It also checks
every domain deletion and every branch of the contradiction tree.
An unknown edge in a partial colouring belongs to neither colour.

The thirty-four counting exclusions and six tree certificates partition
the same fixed set of forty cases. A passed local check must not be
interpreted as a passed full-catalogue check. The full-run combination
procedure in Section~\ref{app:combine} checks this coverage separately.

\subsection{Compiling and checking the search program}
\label{app:build}

\subsubsection{CMake build}
From the archive root, run:
\par\begingroup\footnotesize
\begin{verbatim}
cmake -S src -B build -DCMAKE_BUILD_TYPE=Release
cmake --build build --config Release --parallel 4
\end{verbatim}
\endgroup
With a single-configuration Linux generator, the executable is
\file{build/fan_ramsey17}. Test it and inspect the fixed catalogue:
\par\begingroup\footnotesize
\begin{verbatim}
./build/fan_ramsey17 self-test
./build/fan_ramsey17 catalog --data data
\end{verbatim}
\endgroup
With the Visual Studio generator on Windows, use
\file{build\Release\fan_ramsey17.exe} in place of
\file{./build/fan_ramsey17}. Run the Windows commands in a developer
terminal with the x64 C++ toolchain available.

\subsubsection{Direct builds}
On Linux, the alternative command
\par\begingroup\footnotesize
\begin{verbatim}
bash src/build_linux.sh
\end{verbatim}
\endgroup
creates \file{src/fan_ramsey17}. In the remaining examples replace
\file{./build/fan_ramsey17} by \file{./src/fan_ramsey17} when using this
build.

\begin{minipage}{\linewidth}
In the x64 Native Tools Command Prompt for Visual Studio, the equivalent
Windows procedure is:
\par\begingroup\footnotesize
\begin{verbatim}
pushd src
call build_windows.bat
popd
src\fan_ramsey17.exe self-test
src\fan_ramsey17.exe catalog --data data
\end{verbatim}
\endgroup
\end{minipage}

Here the executable for subsequent commands is
\file{src\fan_ramsey17.exe}. Use ordinary single-line commands in
Windows; a backslash used for Linux line continuation is not a Windows
line-continuation character.

\subsubsection{Expected catalogue dimensions}

\begin{center}
\begin{tabular}{@{}lr@{}}
\toprule
Quantity & Expected value \\
\midrule
Eight-vertex seeds & $8\,812$ \\
Nine-vertex seeds & $115\,174$ \\
Eight-vertex profiles & $6\,860$ \\
Nine-vertex profiles & $60\,041$ \\
Retained ordered seed pairs & $88\,294\,674$ \\
Cases per full task & $500$ \\
Number of tasks & $176\,590$ \\
Cases in the last task & $174$ \\
\bottomrule
\end{tabular}
\end{center}

The executable rejects a catalogue or seed file that differs from its
fixed dataset. It also checks catalogue dimensions and basic profile
fields. These checks are useful consistency tests, not an independent
proof of exhaustive generation.

For a small executable test that does not start the full search, run:
\par\begingroup\footnotesize
\begin{verbatim}
./build/fan_ramsey17 run --data data \
  --machine 1 --machines 1 --threads 1 \
  --task-ids 0 --case-budget 5 --seconds 5 \
  --symmetry on --results results/smoke
\end{verbatim}
\endgroup
This deliberately processes at most five cases. Use a different results
directory for a new independent full computation.

\subsection{Running a full computation on one machine}

\par\begingroup\footnotesize
\begin{verbatim}
./build/fan_ramsey17 run --data data \
  --machine 1 --machines 1 --threads 4 \
  --symmetry on --seconds 5 --results results/machine01
\end{verbatim}
\endgroup

The \texttt{--seconds} option is a time limit for each case, not for
the whole job. The \texttt{--threads} option controls concurrent
workers on this machine. The symmetry option activates the symmetry
constraints described in the main text. An independent run with
\texttt{--symmetry off} is supported but may be slower. The default
task order, \texttt{spread}, is a deterministic permutation of all
assigned tasks and is not a sampling rule.

\begin{minipage}{\linewidth}
The status fields have the following meanings:
\begin{center}
\begin{tabular}{@{}p{0.16\linewidth}p{0.77\linewidth}@{}}
\toprule
Field & Interpretation \\
\midrule
\texttt{PENDING} & Cases not yet processed. \\
\texttt{UNSAT} & Cases for which the SAT solver reports that no
permitted completion exists. \\
\texttt{UNKNOWN} & Cases for which the time-limited attempt did not
decide satisfiability. These are not exclusions. \\
\texttt{SAT} & Cases with a candidate completion; the program checks
and stores its red adjacency data. \\
\bottomrule
\end{tabular}
\end{center}

\end{minipage}

Each task progress line reports counts within that task. Its
\texttt{wall\_seconds} is elapsed time since the current run began;
it is not the duration of that task alone. The final run line reports
the number of attempts in the invocation. The runner may finish
normally with unresolved cases because every assigned case has received
its time-limited attempt.

The flag \texttt{proof\_verified=false} is intentional in production
solver output: the runner does not emit and independently check a
propositional proof for every exclusion. A solver's negative result,
a repeated negative result and a replayed certificate are different
forms of evidence. This flag concerns per-instance propositional proof
checking; it does not mark an instance as unresolved or indicate that
the full computation is incomplete.

\subsection{Resuming, retrying and merging}
\label{app:resume}

Repeating a \texttt{run} command with the same results directory resumes
pending work. Already resolved cases are skipped. Previously unresolved
cases are attempted again only if \texttt{--retry-unknown} is present.
For example, after the first pass has stopped:
\par\begingroup\footnotesize
\begin{verbatim}
./build/fan_ramsey17 run --data data \
  --machine 1 --machines 1 --threads 4 \
  --symmetry on --seconds 60 --retry-unknown \
  --results results/machine01
\end{verbatim}
\endgroup
Further passes may use $600$ or $3600$ seconds per case. These are
example retry settings, not a guarantee that every case will finish.
Keep the dataset and seed ordering unchanged between passes.

After a run has stopped, create a summary and a merged checkpoint set:
\par\begingroup\footnotesize
\begin{verbatim}
./build/fan_ramsey17 merge --data data \
  --inputs results/machine01 --output results/merged_01
\end{verbatim}
\endgroup
The output directory must be new or contain no checkpoint files. The
merge writes \file{summary.json}, \file{pending_tasks.json},
\file{unknown_tasks.json}, and the merged task files. A missing task is
counted as pending; duplicate completed entries are reported. The merge
rejects inconsistent positive and negative decisions for a case.

The field \texttt{all\_pairs\_resolved} means that both the pending
and unresolved counts are zero. It alone does not mean all cases have
been excluded: the positive count must also be checked. The field
\texttt{solver\_reported\_exclusion} records whether the solver has
reported a negative decision for every case. It does not assert
independent proof verification.

To request an orderly stop, issue this command from another terminal:
\par\begingroup\footnotesize
\begin{verbatim}
./build/fan_ramsey17 stop --results results/machine01
\end{verbatim}
\endgroup
Allow the running process to finish and return to the shell before
merging or copying its checkpoints. The next explicit run clears the
stop request. Lock directories such as \file{RUN.lock} and the
individual task locks protect
against concurrent writers. If a process was killed, a lock may remain;
remove only the stale lock after confirming that its process has
terminated. Do not delete checkpoints to recover a stale lock, and do
not let two running processes write the same results directory.

\subsection{Distributed computation on six machines}

Use exactly the same code and data on every machine. Select
\texttt{--machines 6} everywhere and a distinct \texttt{--machine}
number from 1 through 6. Task $t$, numbered from zero, belongs to machine
$(t\bmod 6)+1$. Each machine writes to its own directory. For machine
1, run:
\par\begingroup\footnotesize
\begin{verbatim}
./build/fan_ramsey17 run --data data \
  --machine 1 --machines 6 --threads 4 \
  --symmetry on --seconds 5 --results results/machine01
\end{verbatim}
\endgroup
On machines 2 through 6, change both the machine number and the final
directory name accordingly, for example \texttt{--machine 2} and
\file{results/machine02}. The six workers need not start together.
The number of machines is independent of the number of threads per
machine. For scheduled cluster jobs, request the appropriate CPU
allocation and run the corresponding command inside the batch job.
Scheduler directives depend on the local cluster and are not hard-coded
into the distribution.

Retry on each machine with \texttt{--retry-unknown} and a larger time limit,
using its existing results directory and machine number. Once all
processes have stopped, collect the six directories on one machine,
preserving their distinct names, and merge:
\par\begingroup\footnotesize
\begin{verbatim}
./build/fan_ramsey17 merge --data data \
  --inputs results/machine01 results/machine02 \
           results/machine03 results/machine04 \
           results/machine05 results/machine06 \
  --output results/merged_six_01
\end{verbatim}
\endgroup
If a shared filesystem is used, the same six distinct directories can
be merged in place after the workers stop. Do not merge only the
individual summary files: case-level checkpoints are required.

To change the number of machines, first stop all writers and merge their
checkpoints. Work may then be redistributed by the new task-modulo rule.
Each new worker needs the existing checkpoint files for its assigned
tasks in its own results directory. Preserve the original merged copy
until the redistributed run has been checked. Reassigning tasks changes
the allocation of work, not the case indices or mathematical search.

\subsection{Combining a full run with the local certificates}
\label{app:combine}

After creating a complete merged checkpoint directory, run:
\par\begingroup\footnotesize
\begin{verbatim}
python3 combine_results.py --results results/merged_01 \
  --output results/combined.json
\end{verbatim}
\endgroup
Use \file{results/merged_six_01} instead for the six-machine example.
This program checks the checkpoint dataset, task ranges, status integrity
and case-level coverage, and reruns the forty-case verifier. It requires
zero pending cases and zero positive cases. Every unresolved case in
the merged run must belong to the fixed set excluded by the local
certificates. An unresolved case outside that set causes failure and
requires further computation.

The completed computation has the following decomposition:
\[
88\,294\,634\ \text{solver exclusions}
+34\ \text{counting exclusions}
+6\ \text{certificate exclusions}
=88\,294\,674.
\]
The combined report records the actual split in the reader's run; it
need not reproduce the same intermediate unresolved counts. It keeps
solver decisions and local certificate exclusions distinct. Successful
combination establishes complete case-level coverage under these checks;
it does not supply an independently checked propositional proof for
each bulk UNSAT decision.

The cumulative counts from the bulk-search phase are listed below
for comparison. The forty pairs left unresolved after that phase are
all excluded by the subsequent counting and certificate checks. These
summary counts are not checkpoint data and cannot substitute for
the case-level coverage check.
\begin{center}
\begin{tabular}{@{}lrr@{}}
\toprule
Pass & Excluded by the solver & Unresolved \\
\midrule
First & $88\,294\,434$ & $240$ \\
Second & $88\,294\,572$ & $102$ \\
Third & $88\,294\,620$ & $54$ \\
Fourth & $88\,294\,634$ & $40$ \\
\bottomrule
\end{tabular}
\end{center}
Each of these summaries reported zero pending cases and zero positive
cases. Different hardware and timing can change the intermediate
unresolved set without changing the underlying instances.

\subsection{Independent checks and witness verification}

The independent Python data validator uses the standard library:
\par\begingroup\footnotesize
\begin{verbatim}
python3 tools/search17_validate.py self-test
python3 tools/search17_validate.py data --data data \
  --profile-samples 4 --case-samples 12 \
  --output results/data_check.json
\end{verbatim}
\endgroup
It checks every seed record and the basic catalogue fields, with full
profile and indexing checks on the selected samples. The sample counts
can be increased. This command is not a second exhaustive canonical
generation and does not check all negative solver decisions.

If a run reports a positive case, preserve the stored witness and
verify it before drawing a conclusion:
\par\begingroup\footnotesize
\begin{verbatim}
python3 tools/search17_validate.py verify \
  --input results/machine01/SAT_FOUND.json --order 17 \
  --output results/witness_check.json
\end{verbatim}
\endgroup
The program directly checks the proposed colouring for forbidden fans.
A valid admissible colouring on seventeen vertices would contradict
the nonexistence result proved in the main text, so a positive result
must never be discarded
because it differs from the reported computation.

For a single seed pair, such as one of the six certified cases, use:
\par\begingroup\footnotesize
\begin{verbatim}
./build/fan_ramsey17 pair --data data \
  --left-index 3477 --right-index 1235 \
  --seconds 60 --symmetry off --output results/pair.json
\end{verbatim}
\endgroup
The pair command selects indices in the seed files, not a task number.
Its time-limited result is separate from replay of the supplied proof
certificate for that pair.

\subsection{Regenerating the seed libraries and catalogue}
\label{app:regenerate}

Carry out regeneration in a working copy. Use an isolated Python
environment, for example on Linux:
\par\begingroup\footnotesize
\begin{verbatim}
python3 -m venv .venv
. .venv/bin/activate
python -m pip install -r tools/requirements.txt
mkdir -p regenerated
\end{verbatim}
\endgroup
The requirements fix igraph 1.0.0, NumPy 2.3.5, texttable 1.7.0 and
NetworkX 3.7. Do not use
Python's optimisation flag \texttt{-O}: some preparation consistency
checks are assertions.

\subsubsection{Canonical generation through order nine}
\par\begingroup\footnotesize
\begin{verbatim}
python tools/canonical_fans.py --validate
python tools/canonical_fans.py --max-order 9 --target 17 --audit \
  --export regenerated/fan9_target17.graph6
\end{verbatim}
\endgroup
The first command tests the unrestricted generator against known small
graph counts. The second runs the restricted canonical augmentation
used in this search. No finite time limit is imposed here; the program
refuses to export an incomplete level as a full seed library.
The numbers of retained isomorphism classes at orders $0$ through $9$
are expected to be
\[
1,\ 1,\ 2,\ 4,\ 11,\ 34,\ 155,\ 941,\ 8\,812,\ 115\,174.
\]
The restrictions are hereditary fan and clique exclusions. A minimum
red degree of seven is imposed only on a completed seventeen-vertex
colouring, not on a seed during augmentation.

\subsubsection{Profiles, filters and stable case indexing}
To regenerate the eight-vertex library and construct the profile and
pair catalogue using the newly generated nine-vertex library, run:
\par\begingroup\footnotesize
\begin{verbatim}
python tools/search17_prepare.py --data regenerated/data \
  --source-nine regenerated/fan9_target17.graph6 \
  --canonical-source tools/canonical_fans.py
\end{verbatim}
\endgroup
The source-nine option is checked against the fixed nine-vertex seed
identity. As a less expensive separate test of profile preparation,
one may instead supply \file{data/fan9_target17.graph6}; this reuses the
archived nine-vertex library and does not regenerate that library.

Check the regenerated fixed inputs and the bindings of the forty
exceptional cases:
\par\begingroup\footnotesize
\begin{verbatim}
python tools/check_data.py --data regenerated/data \
  --cases certificates/cases.json
python tools/search17_validate.py data --data regenerated/data \
  --profile-samples 6 --case-samples 24 \
  --output regenerated/data_check.json
./build/fan_ramsey17 catalog --data regenerated/data
\end{verbatim}
\endgroup

The expected cumulative numbers of ordered pairs are:
\begin{center}
\begin{tabular}{@{}lr@{}}
\toprule
Stage & Ordered pairs retained \\
\midrule
All eight-by-nine seed pairs & $1\,014\,913\,288$ \\
Maximum-degree compatibility & $682\,271\,903$ \\
Cross-edge count intervals & $682\,057\,873$ \\
Joint centre-type compatibility & $165\,925\,659$ \\
Saturated maximum-degree boundary & $88\,294\,674$ \\
\bottomrule
\end{tabular}
\end{center}

The preparation writes a new catalogue, identity information and a
corresponding C++ identity header in its output tree. It also generates
profile caches and run reports for that invocation. These generated
caches are not required in the published archive. Check that the
regenerated fixed files agree with the archived identities before
using existing checkpoints. If the ordering changes, even when the
graphs are isomorphic and the counts agree, old case indices no longer
identify the same instances. Do not bypass the executable's identity
checks or mix checkpoints from different catalogues.

\subsection{Replaying and regenerating certificates}

For the ordinary verification procedure, use Section~\ref{app:quick};
certificate regeneration is optional. Each finite-domain certificate
uses the degree interval $7\leq d_R(v)\leq10$ and the necessary
one-for-one swap inequalities for a maximum red $8+9$ cut. Here
$d_R(v)$ is the red degree of vertex $v$. The checker validates these
finite constraints using the fixed seed adjacency data. The main text
proves why every admissible seventeen-vertex colouring has such a
partition.

The six certificate case indices and their seed indices are:
\begin{center}
\begin{tabular}{@{}rrrr@{}}
\toprule
Case index & Eight-vertex seed & Nine-vertex seed & Search nodes \\
\midrule
$51\,996\,400$ & $3477$ & $1235$ & $911$ \\
$64\,095\,209$ & $4228$ & $1235$ & $917$ \\
$69\,103\,064$ & $4877$ & $68943$ & $1$ \\
$69\,257\,548$ & $4893$ & $68943$ & $1$ \\
$71\,867\,158$ & $5479$ & $1235$ & $5668$ \\
$82\,561\,944$ & $6899$ & $30887$ & $1$ \\
\bottomrule
\end{tabular}
\end{center}

The CMake build in Section~\ref{app:build} also builds the certificate
generator \file{build/fan_tail_certify}. Regenerate all local proof data
in a new directory with:
\par\begingroup\footnotesize
\begin{verbatim}
python3 tools/regenerate_certificates.py \
  --binary build/fan_tail_certify \
  --output generated_certificates --seconds 120
\end{verbatim}
\endgroup
For a Visual Studio build, use
\file{build/Release/fan_tail_certify.exe} as the binary path.
The helper rebuilds the thirty-four-case counting certificate,
selects the six remaining cases from the fixed input list, calls the
C++ generator without symmetry restrictions, and independently replays
all regenerated certificates. The output directory must be empty.
The time limit is per case; if a certificate is not produced before
the limit, use a larger limit and a new output directory.

\begin{minipage}{\linewidth}
The regenerated files can also be checked in a separate invocation:
\par\begingroup\footnotesize
\begin{verbatim}
python3 verify40.py \
  --structural generated_certificates/theory_34_certificate.json \
  --csp-dir generated_certificates \
  --report results/regenerated_verified40.json
\end{verbatim}
\endgroup
\end{minipage}

Certificate generation must disable the optional twin-row symmetry
restriction, because the replay checker verifies a complete branching
tree without invoking a symmetry argument. A timeout or incomplete
tree is not an exclusion certificate. Store newly generated
certificates in a fresh directory and replay them before using them.
The helper enforces this symmetry setting. The archived certificates
are sufficient to reproduce all six independent exclusions without
rebuilding the generator.

\subsection{Online reproducibility archive}

The reproducibility archive contains the final
source code, fixed inputs, independent checkers and certificate data;
it also includes this guide so that the archive is self-contained.
Research notebooks, obsolete executables, intermediate checkpoints,
local dependency installations and development logs are not part of
that distribution. Required third-party licences are retained.

The reproducibility package is publicly archived in Zenodo at\par\nobreak\noindent
\url{https://zenodo.org/records/23119016}.
Its persistent identifier is\par\nobreak\noindent
\url{https://doi.org/10.5281/zenodo.23119016}.
The source repository is available at\par\nobreak\noindent
\url{https://github.com/jianghanxin01/fan-F3F4-reproducibility}.\par
For a fixed archived copy, use the Zenodo record. The GitHub repository
may contain later updates. When citing the computational materials,
use the Zenodo DOI, which identifies the reproducibility package
rather than this paper.

\endgroup

\end{document}